\tikzset{
>=stealth',
  punktchain/.style={
    rectangle,
    rounded corners,
    draw=black, thick,
    minimum height=3em,
    text centered,
    on chain},
  line/.style={draw, thick, <-},
  eLement/.style={
    tape,
    top color=white,
    bottom color=blue!50!black!60!,
    minimum width=8em,
    draw=blue!40!black!90, very thick,
    text width=10em,
    minimum height=3.5em,
    text centered,
    on chain},
  every join/.style={->, thick,shorten >=1pt},
  decoration={brace},
  tuborg/.style={decorate},
  tubnode/.style={midway, right=2pt},
}
\setlist[enumerate,1]{label={\upshape(\arabic*)}}
\setlist[enumerate,2]{label={\upshape(\alph*)},ref=\theenumi\upshape(\alph*)}
\setlist[enumerate,3]{label={\upshape(\roman*)},ref=\theenumi\theenumii\upshape(\roman*)}
\crefname{Prop}{Proposition}{Propositions}
\crefname{Thm}{Theorem}{Theorems}
\crefname{Lem}{Lemma}{Lemmas}
\crefname{enumi}{Case}{Cases}
\newcommand{\prs}[1]{\left( #1 \right)}
\newcommand{\mcal}[1]{\mathcal{#1}}
\newcommand{\mbf}[1]{\mathbf{#1}}
\DeclareMathOperator{\tf}{t.f.}
\newcommand{\id}{\mathrm{id}}
\newcommand{\eps}{\varepsilon}
\DeclareMathOperator{\rank}{rank}
\DeclareMathOperator{\Todd}{Td}
\def\ch{\mathop{\mathrm{ch}}\nolimits}
\def\Char{\mathop{\mathrm{char}}}
\def\Coh{\mathop{\mathrm{Coh}}\nolimits}
\def\codim{\mathop{\mathrm{codim}}\nolimits}
\def\deg{\mathop{\mathrm{deg}}}
\def\depth{\mathop{\mathrm{depth}}}
\def\dim{\mathop{\mathrm{dim}}\nolimits}
\def\Ext{\mathop{\mathrm{Ext}}\nolimits}
\def\lExt{\mathop{\mathcal Ext}\nolimits} 
\def\lHom{\mathop{\mathcal Hom}\nolimits}
\def\RlHom{\mathop{\mathbf{R}\mathcal Hom}\nolimits}
\def\id{\mathop{\mathrm{id}}\nolimits}
\def\sing{\mathop{\mathrm{sing}}}
\def\supp{\mathop{\mathrm{supp}}}
\def\Td{\mathop{\mathrm{Td}}\nolimits}
\def\tor{\mathop{\mathrm{tor}}\nolimits}
\def\MG13{\ensuremath{{\mathcal M}_{\Gamma_1(3)}}}
\def\tildeMG13{\ensuremath{\widetilde{\mathcal M}_{\Gamma_1(3)}}}
\def\into{\ensuremath{\hookrightarrow}}
\def\Db{\mathrm{D}^{\mathrm{b}}}
\newcommand\TFILTB[3]{%
\xymatrix@=1pc{
{0 = {#1}_0} \ar[rr]&&
{{#1}_1} \ar[rr]\ar[ld] &&
{{#1}_2} \ar[r]\ar[ld] &
{\cdots} \ar[r] & { {#1}_{#3-1}} \ar[rr] &&
{{#1}_{#3} = {#1}} \ar[ld]
\\
& *{{#2}_1} \ar@{.>}[ul] &&
{{#2}_2} \ar@{.>}[ul] & &&&
{{#2}_{{#3}}} \ar@{.>}[ul]
}}
\def\abs#1{\left\lvert#1\right\rvert}
\newtheorem*{rep@theorem}{\rep@title}
\newcommand{\newreptheorem}[2]{%
\newenvironment{rep#1}[1]{%
 \def\rep@title{#2 \ref{##1}}%
 \begin{rep@theorem}}%
 {\end{rep@theorem}}}
\newtheorem{Thm}{Theorem}[section]
\newtheorem{Prop}[Thm]{Proposition}
\newtheorem{Lem}[Thm]{Lem}
\newtheorem{Cor}[Thm]{Corollary}
\newtheorem*{theorem*}{Theorem}
\newtheorem*{lemma*}{Lem}
\newtheorem*{proposition*}{Proposition}
\newtheorem*{conjecture*}{Conjecture}
\newtheorem*{corollary*}{Cor}
\newtheorem*{problem*}{Problem}
\newtheorem{Thm-int}{Theorem}
\theoremstyle{definition}
\newtheorem{Def-s}[Thm]{Definition}
\newtheorem{Def}[Thm]{Definition}
\newtheorem{Rem}[Thm]{Remark}
\def\EE{\ensuremath{\mathcal E}}
\def\FF{\ensuremath{\mathcal F}}
\def\HH{\ensuremath{\mathcal H}}
\def\LL{\ensuremath{\mathcal L}}
\def\OO{\ensuremath{\mathcal O}}
\def\TT{\ensuremath{\mathcal T}}
\def\M{\ensuremath{\widetilde{M}}}
\def\X{\ensuremath{\widetilde{X}}}
\newcommand{\ignore}[1]{}
\begin{document}

\title[Bogomolov-Gieseker over RDP's]{The Bogomolov-Gieseker-Koseki inequality on surfaces with canonical singularities in arbitrary characteristic}

\author{Howard Nuer}
\author{Alan Sorani}


\begin{abstract}
This note generalizes the celebrated Bogomolov-Gieseker inequality for smooth projective surfaces over an algebraically closed field of characteristic zero to projective surfaces in arbitrary characteristic with canonical singularities.  We also generalize to this context some classical applications of the Bogomolov-Gieseker inequality.
\end{abstract}

\maketitle
\setcounter{tocdepth}{1}

\section{Introduction}
\subsection{The BG inequality and its generalizations}The celebrated Bogomolov-Gieseker (BG) inequality \cite[Theorem 3.4.1]{HL10} states that a $\mu_H$-semistable sheaf $\EE$ on a smooth polarized surface $(X,H)$ in characteristic zero has non-negative discriminant, i.e. 
\begin{equation}\label{eqn:BG}
    \Delta(\EE):=\ch_1(\EE)^2-2\ch_0(\EE)\ch_2(\EE)\geq 0.
\end{equation}
While semistable sheaves on smooth projective curves exist for any pair of rank and degree, the BG inequality puts strong restrictions on the Chern characters of semistable sheaves on smooth projective surfaces.  This is crucial in the classification problem for Chern characters of semistable sheaves on any surface.  
The inequality also gives effective results guaranteeing the stability of the restriction of a semistable sheaf to a curve in a linear system that is sufficiently divisible.
Furthermore, the classical BG inequality is fundamental in the application of vector bundle methods to the study of linear systems and vanishing theorems, as in Reider's Theorem and Kawamata-Viewegh vanishing respectively (see \cite[Corollary 4, Theorem 6]{Fri98}).   More recently, the BG inequality is needed to construct Bridgeland stability conditions on the bounded derived category $\Db(X)$ of coherent sheaves on $X$ \cite{AB13,Bri08}.

Generalizations of the BG inequality come in various flavors.  
In higher dimension, the natural generalization holds \cite[Theorem 7.3.1]{HL10}: for a $\mu_H$-semistable sheaf $\EE$ on an $n$-dimensional smooth polarized projective variety $(X,H)$ over an algebraically closed field of characteristic zero, $\Delta(\EE). H^{n-2}\geq 0$.  In positive characteristic, the BG inequality for surfaces is false \cite{Mukai:CounterExamples,Raynaud:Counterexample}, but a strong modified BG inequality holds.  Indeed, for a $\mu_H$-semistable sheaf $\EE$ on a smooth polarized surface $(X,H)$ over an algebraically closed field of positive characteristic, Koseki has shown \cite{koseki:bg-pos} that 
\begin{equation}\label{eqn:BGK}
    \Delta(\EE)+C_X\ch_0(\EE)^2\geq 0,
\end{equation}
where $C_X$ is a non-negative constant depending only on the birational equivalence class of $X$.
An improvement on previous modifications of Langer \cite{Langer:SemistablePositiveChar,Langer:HiggsPositiveChar}, the Bogomolov-Gieseker-Koseki (BGK) inequality \eqref{eqn:BGK} is strong enough to give effective restriction theorems and to construct Bridgeland stability conditions for smooth projective surfaces in positive characteristic.
A third flavor holds for an $n$-dimensional compact K\"{a}hler manifold $(X,\omega)$, 
\begin{equation}\label{eqn:BGKahler}
    \int_X\Delta(\EE)\wedge\omega^{n-2}\geq 0,
\end{equation}
as proven in \cite{BandoSiu:KahlerBG}.
A final flavor, unrelated to the direct objectives of this note, formulates for tilt-stable complexes on smooth projective threefolds a similar inequality to the BG inequality that also takes into account the third Chern character.  This is the conjectural key step in constructing Bridgeland stability conditions in dimension three \cite{BMT14a}.

In this note we explore another flavor of generalization of the BG and BGK inequalities, applying instead to singular varieties.  
Recently, Wu \cite{Wu:BGSingular} has generalized \eqref{eqn:BG} and \eqref{eqn:BGKahler} to compact, irreducible K\"{a}hler complex analytic spaces $(X,\omega)$ that are smooth in codimension two.  Note that even in characteristic zero, this result says nothing about singular projective surfaces, which is where the present note takes off.

We work in this paper over an algebraically closed field $k$ of arbitrary characteristic and consider projective surfaces with certain mild singularities, proving the following theorem:
\begin{Thm}[Main Theorem]\label{Thm:MainThm}
Let $X$ be a projective surface over an algebraically closed field $k$ with at worst rational double point singularities and let $H$ be an ample divisor on $X$.  For every $\mu_H$-semistable sheaf $\mcal{F}$ on $X$, 
$$\Delta(\FF) + C_X \ch_0(\FF)^2\geq 0.$$
\end{Thm}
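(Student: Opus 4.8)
The plan is to descend the inequality from Koseki's smooth BGK inequality \eqref{eqn:BGK} along the minimal resolution $\pi\colon\X\to X$. Since $X$ has at worst rational double points, $\pi$ is crepant, its exceptional locus is a union of $(-2)$-curves $E_1,\dots,E_n$ whose intersection matrix $(E_i\cdot E_j)$ is negative-definite, and $X$ has rational singularities, so $\pi_*\OO_{\X}=\OO_X$ and $R^1\pi_*\OO_{\X}=0$. The divisor $\pi^*H$ is nef and big but not ample, being trivial on every $E_i$, while the constant produced by Koseki on the smooth surface $\X$ depends only on its birational class, which coincides with that of $X$; this is what will allow us to set $C_X:=C_{\X}$.

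First I would reduce to the case that $\FF$ is reflexive: passing from $\FF$ to $\FF^{\vee\vee}$ preserves $\ch_0$ and $\mu_H$-semistability (the inclusion is an isomorphism in codimension one) and changes $\Delta$ by $-2\,\ch_0(\FF)\,\len(\FF^{\vee\vee}/\FF)\le 0$, so $\Delta(\FF)\ge\Delta(\FF^{\vee\vee})$ and it suffices to treat reflexive $\FF$. I would then set $\widetilde{\FF}:=(\pi^*\FF)^{\vee\vee}$, which is locally free on the smooth surface $\X$, has $\ch_0(\widetilde{\FF})=\ch_0(\FF)$, and satisfies $\pi_*\widetilde{\FF}=\FF$ (both sheaves are reflexive on $X$ and agree in codimension one). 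To prove $\widetilde{\FF}$ semistable, I would take a saturated subsheaf $\widetilde{\GG}\subset\widetilde{\FF}$ and push it forward: since $(\pi^*H)\cdot E_i=0$, exceptional components of $\ch_1(\widetilde{\GG})$ do not affect its $\mu_{\pi^*H}$-slope, giving $\mu_{\pi^*H}(\widetilde{\GG})=\mu_H(\pi_*\widetilde{\GG})$, so a destabilizing $\widetilde{\GG}$ would make $\pi_*\widetilde{\GG}\subset\FF$ destabilizing, contradicting the semistability of $\FF$. As $\pi^*H$ is only nef and big, I would either appeal to the version of \eqref{eqn:BGK} valid for such polarizations or perturb $\pi^*H$ to a nearby ample class, using that $\mu$-semistability is preserved across the finitely many relevant walls.

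Applying Koseki's inequality on $\X$ then gives $\Delta(\widetilde{\FF})+C_{\X}\,\ch_0(\widetilde{\FF})^2\ge 0$, and it remains to compare the two discriminants. Writing $\ch_1(\widetilde{\FF})=\pi^*\ch_1(\FF)+D$ with $D=\sum_i a_iE_i$ supported on the exceptional locus, the projection formula gives $\pi^*\ch_1(\FF)\cdot E_i=0$ and hence $\ch_1(\widetilde{\FF})^2=\ch_1(\FF)^2+D^2$ with $D^2\le 0$ by negative-definiteness. Combining this with the non-negative length of the sheaf killed in passing to the reflexive hull over the singular points, I expect to obtain $\Delta(\FF)\ge\Delta(\widetilde{\FF})$, whence $\Delta(\FF)+C_{\X}\,\ch_0(\FF)^2\ge 0$ and the theorem follows with $C_X=C_{\X}$.

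The main obstacle is precisely this last comparison: one must account for how $\ch_2$ changes under pull-back followed by reflexivization at the rational double points and verify that all correction terms concentrated on the $(-2)$-curves carry the right sign. This is where the negative-definiteness of the exceptional configuration---equivalently, Mumford's rational-valued intersection theory on the normal surface $X$---must be combined with a local analysis of reflexive modules at each rational double point, and it is the step I expect to require the most care.
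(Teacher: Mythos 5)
Your overall strategy is the one the paper follows: reduce to reflexive $\FF$, pass to the minimal resolution $\pi\colon\X\to X$, replace $\pi^*\FF$ by a locally free sheaf (your reflexive hull $\prs{\pi^*\FF}^{\vee\vee}$ coincides with the paper's torsion-free quotient $\prs{\pi^*\FF}_{\tf}$, since the latter is locally free by \cref{Prop:SheafCorrespondence}), prove its $\mu_{\pi^*H}$-semistability by pushing forward subsheaves as in \cref{proposition:pushforward}, invoke Koseki's inequality for the numerically non-trivial nef class $\pi^*H$ --- your first alternative, which is exactly what \cref{Thm:BG-smooth} together with \cref{proposition:invariance} provides; the ``perturb to a nearby ample class'' alternative is unnecessary and delicate --- and finally set $C_X:=C_{\X}$ by birational invariance. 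All of these steps are sound and match the paper.

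The genuine gap is exactly the step you flag as requiring ``the most care'': the comparison $\Delta\prs{\FF}\geq\Delta\prs{\widetilde{\FF}}$. Your proposed mechanism --- negative-definiteness of the exceptional lattice plus ``the non-negative length of the sheaf killed in passing to the reflexive hull'' --- does not work, because the sheaf killed, $T=\prs{\pi^*\FF}_{\tor}$, is \emph{one}-dimensional (supported on the exceptional curves), so $\ch_2\prs{T}$ is not a length and has no a priori sign. Writing $Z=\ch_1\prs{T}$, the correct identity is $\Delta\prs{\FF}=\Delta\prs{\EE}-Z^2-\ch_0\prs{\EE}\,K_{\X}.Z$, so besides $-Z^2\geq 0$ one must control the term $K_{\X}.Z$, equivalently compute $\ch_2\prs{T}$; this is where the paper's real technical input lies. \Cref{lemma:ch2 of torsion part} proves $\ch_2\prs{T}=\tfrac{1}{2}K_{\X}.\ch_1\prs{T}$ via Fulton's Riemann--Roch for singular varieties, applied to the l.c.i.\ embedding $X\into P$, to the identity $\Td\prs{X}=\pi_*\Td\prs{\X}$ valid for rational singularities, and to the fact that $\mbf{R}\pi_*\EE=\FF$ (including the vanishing of $\mbf{R}^1\pi_*\EE$ from \cref{Prop:SheafCorrespondence}, which your sketch never establishes). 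Then crepancy of the minimal resolution of RDPs, $K_{\X}=\pi^*K_X$, forces $K_{\X}.Z=K_X.\pi_*Z=0$. You state crepancy at the outset but never use it in the comparison; without the Riemann--Roch identity and crepancy (e.g.\ for general rational singularities) the term $\ch_0\prs{\EE}K_{\X}.Z$ has uncontrolled sign and depends on $\FF$, so the inequality does not follow --- which is precisely why the theorem is restricted to rational double points.
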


Even in the absence of singularities, generalizing the BG (or BGK) inequality to compact K\"{a}hler manifolds presents difficulties since they do not necessarily satisfy the resolution property, that is, not every coherent sheaf admits a resolution by holomorphic vector bundles \cite{Voisin:Counterexamples}.  
While Chern classes are defined for holomorphic vector bundles, without the resolution property it becomes difficult to establish a well-defined theory of Chern classes for arbitrary coherent sheaves.
For singular compact K\"{a}hler complex spaces, an additional difficulty arises in considering the correct theory of cohomology to work in, as singular cohomology may no longer be the same as De Rham cohomology.

\subsection{Idea of the proof}
We side-step these difficulties by working with projective surfaces and Chow groups.  In particular, we have a closed embedding $i\colon X\into P$ of our singular surface $X$ into some projective space $P$.  
Because $P$ is smooth, it satisfies the resolution property, so we can define the Chern classes of $\FF$ via pull-back along $i$ of the Chern classes of the coherent sheaf $i_*\prs{\FF}$ on $P$.
Working with Chow also allows us work in arbitrary characteristic.  

Having settled on an appropriate cohomological venue, we encounter a second difficulty (even in characteristic zero).
The usual proof of the BG inequality for smooth surfaces makes a reduction from arbitrary $\mu_H$-semistable torsion-free sheaves to locally free ones by taking the reflexive dual.  
For singular surfaces, reflexive sheaves are no longer necessarily locally free and their symmetric powers are not well behaved, derailing attempts to push the classical proof of the BG inequality further in this setting.
Nevertheless, in the singular case we can still reduce to the case of reflexive sheaves.

To overcome this hurdle, we pull $\FF$ back along the minimal resolution of singularities $\pi\colon\X\to X$, which exists for surfaces in arbitrary characteristic.  Restricting to rational singularities guarantees that $\mbf{R}\pi_*\prs{\pi^*\FF}=\FF$, so the Chern characters of $\FF$ and $\pi^*\FF$ should be very closely related via Grothendieck-Riemann-Roch.  If $\pi^*\FF$ were $\mu_{H'}$-semistable for some ample $H'$, then we could indeed obtain the desired inequality immediately.  Unfortunately, $\pi^*\FF$ can have a non-trivial torsion subsheaf supported on the exceptional locus of $\pi$.  Taking the torsion-free quotient $\prs{\pi^*\FF}_{\tf}$, we obtain a $\mu_{\pi^*H}$-semistable locally free sheaf on $\X$ and because $\FF$ is reflexive we still have $\mbf{R}\pi_*\prs{\prs{\pi^*\FF}_{\tf}}=\FF$.
Of course, $\pi^*H$ is not ample, only nef and big, so while we cannot immediately deduce the desired inequality for $\prs{\pi^*\FF}_{\tf}$, it does follows from Koseki's invariance-of-polarization result on the smooth projective surface $\X$ \cite{koseki:bg-pos}.  By restricting to rational double point (RDP) singularities, we can use Grothendieck-Riemann-Roch to deduce the BGK inequality for $\FF$.

\subsection{Further directions}
Rational double point singularities are the natural setting arising in the study of moduli spaces of surfaces.  Indeed, in characteristic zero RDP singularities are precisely the canonical surface singularities, i.e. the singularities one encounters on the canonical models of smooth surfaces when running the Minimal Model Program (MMP) \cite[Claim 4-6-10]{Mat02} .  Our hope is that our generalization of the BGK inequality to canonical surface singularities extends the range and power of vector bundle techniques in the MMP and the study of moduli of surfaces.

The natural generalization of our result to higher dimension, at least in characteristic zero, would be for an $n$-dimensional projective variety $X$ with Gorenstein, rational singularities, as these are the two features of RDP singularities we use. 
In the setting of characteristic zero, these conditions are equivalent to having canonical singularities \cite[Corollary 5.24]{KM98}.
By taking the intersection of $n-2$ general hyperplane sections $H_i\in |H|$, we obtain a surface $S:=H_1\cap\dots\cap H_{n-2}$ with at worst RDP singularities.   So if one can prove a Mehta-Ramanathan-type restriciton theorem for varieties with at worst Gorenstein, rational singularities, then for a $\mu_H$-semistable sheaf $\FF$ on $X$, $\FF|_S$ must be $\mu_{H|_S}$-semistable and we can deduce that
$$\Delta(\FF). H^{n-2}=\Delta(\FF|_S)\geq 0,$$
as required.  A similar approach to Koseki's in positive characteristic would likely adjust this argument to work for proving the BGK inequality on such singular varieties in positive characteristic. 

In another direction, we expect that our generalization of the BGK inequality will allow for the construction of Bridgeland stability conditions on projective surfaces with RDP singularities in arbitrary characteristic.

\subsection*{Acknowledgments}
The seed for the ideas here was planted during conversations at Northeastern University between the first author and Emanuele Macr\`{i}.  He thanks Emanuele for these  productive conversations and Northeastern for providing a conducive research environment.  The authors would also like to thank Izzet Coskun and Evgeny Shinder for illuminating discussions during the writing of this paper.  

\section{Reflexive sheaves on singular surfaces}
We start by recalling the definition of rational singularities (see \cite[Definition 2.76]{Kol13}).  
\begin{Def}\label{Def:RationalSingularities}
A variety $X$ over an algebraically closed field $k$ has \emph{rational singularities} if it admits a rational resolution of singularities, i.e. a resolution $\pi:\X\to X$ such that $\mbf{R}\pi_*\prs{\OO_{\X}}\cong\OO_X$ and $\mbf{R}\pi_*\prs{\omega_{\X}}\cong\omega_X$ in the derived category $\Db(X)$.
It turns out that if $X$ admits one resolution as above, then every resolution satisfies these properties.
\end{Def}
\begin{Rem}
In characteristic zero, the second requirement that $\mbf{R}\pi_*\prs{\omega_{\X}}\cong\omega_X$ follows from the Grauert-Reimenschneider theorem, while it is necessary to state in positive characteristic.  Similarly, the fact that every resolution of a variety with rational singularities is rational is quite easy in characteristic zero, but quite hard, albeit true, in positive characteristic (see the discussion following \cite[Definition 2.76]{Kol13}).
\end{Rem}
We also recall the following easy observation:
\begin{Lem}\label{lemma:pushforward-tf}
Let $\pi \colon Y \to X$ be a dominant map between integral schemes, and let $M$ be a torsion-free sheaf on $Y$. Then $\pi_*\prs{M}$ is torsion-free.
\end{Lem}

The following well-known result (see \cite[Lemma and Definition 2.2]{Esnault:Reflexive}), whose proof we include for the sake of completeness, is one fundamental key to our approach.  
\begin{Prop}\label{Prop:SheafCorrespondence}
Let $X$ be a projective surface with at worst rational singularities and with a resolution of singularities $\pi\colon\X\to X$.
Let $\M$ be a sheaf on $\X$.  There exists a reflexive sheaf $M$ on $X$ such that $\M=\prs{\pi^*M}_{\tf}:=\pi^*M/(\pi^*M)_{\tor}$ if and only if the following conditions are satisfied:
\begin{enumerate}
\item $\M$ is locally free.
\item $\pi^*\pi_*\M\to\M$ is surjective.
\item $\mbf{R}^1\pi_*(\M^{\vee}\otimes\omega_{\X})=0$.
\end{enumerate}
In this case $\mbf{R}\pi_*\M=M$ and $\pi_*(\M^{\vee})=M^{\vee}$.
\end{Prop}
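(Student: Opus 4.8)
The plan is to run everything through relative Grothendieck duality for the proper birational morphism $\pi\colon\X\to X$, with the rational-singularities hypothesis supplying exactly the two vanishings that make the characteristic-free argument go through. By \cref{Def:RationalSingularities} we have $\mbf{R}\pi_*\OO_{\X}\cong\OO_X$ and $\mbf{R}\pi_*\omega_{\X}\cong\omega_X$; the second says the relative dualizing complex is trivial, i.e. $\pi^{!}\omega_X\cong\omega_{\X}$, so for every $\mcal{F}\in\Db(\X)$ I get
\[ \mbf{R}\pi_*\,\RlHom_{\X}(\mcal{F},\omega_{\X})\cong\RlHom_X(\mbf{R}\pi_*\mcal{F},\omega_X). \]
I will pair this with three standard facts on the Cohen--Macaulay surface $X$ (rational singularities are Cohen--Macaulay): a torsion-free sheaf $G$ satisfies $\lExt^2_X(G,\omega_X)=0$; a torsion-free $G$ is reflexive if and only if $\lExt^1_X(G,\omega_X)=0$ (Serre's $S_2$ criterion on the normal surface $X$); and, since the fibres of $\pi$ have dimension at most one, $R^i\pi_*$ vanishes for $i\geq 2$. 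A recurring lemma, proved by comparing $\mbf{L}\pi^*$ with $\pi^*$ and using the projection formula $\mbf{R}\pi_*\mbf{L}\pi^*(-)\cong(-)\otimes^{\mbf{L}}\mbf{R}\pi_*\OO_{\X}$, is that $R^1\pi_*\pi^*G=0$ for any coherent $G$ on $X$; the identical argument with $\omega_{\X}$ in place of $\OO_{\X}$ gives $R^1\pi_*(\pi^*G\otimes\omega_{\X})=0$. In both cases the lower Tor sheaves are supported on the exceptional locus and contribute nothing in degree $\geq 1$ after $\mbf{R}\pi_*$ by the fibre bound.

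For the converse, assume (1)--(3) and set $M:=\pi_*\M$, which is torsion-free by \cref{lemma:pushforward-tf}. Condition (2) exhibits $\M$ as a quotient of $\pi^*M$, and since $R^1\pi_*\pi^*M=0$ and $R^2\pi_*$ of the kernel vanishes, I conclude $R^1\pi_*\M=0$, hence $\mbf{R}\pi_*\M=M$. Now I feed $\M$, locally free by (1), into the duality isomorphism: the left side becomes $\mbf{R}\pi_*(\M^{\vee}\otimes\omega_{\X})$ and the right side becomes $\RlHom_X(M,\omega_X)$. Comparing $\mathcal{H}^1$ identifies $\lExt^1_X(M,\omega_X)$ with $R^1\pi_*(\M^{\vee}\otimes\omega_{\X})$, which vanishes by (3); together with $\lExt^2_X(M,\omega_X)=0$ this shows $M$ is reflexive. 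Finally the surjection $\pi^*M\onto\M$ from (2) factors through a surjection $(\pi^*M)_{\tf}\onto\M$ of torsion-free sheaves of equal rank that is an isomorphism over the smooth locus, hence an isomorphism; thus $\M=(\pi^*M)_{\tf}$.

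For the forward implication, assume $M$ reflexive and $\M=(\pi^*M)_{\tf}$. The quotient map $\pi^*M\onto\M$ again gives $R^1\pi_*\M=0$, and the adjunction map $M\to\pi_*\M$ is an isomorphism (it is injective with a left inverse coming from reflexivity of $M$, and the composite must be the identity because it is so over the smooth locus), so $\mbf{R}\pi_*\M=M$ and (2) holds. The crux is local freeness (1). Since $M$ is reflexive, the right side $\RlHom_X(M,\omega_X)=\lHom_X(M,\omega_X)$ of the duality isomorphism is concentrated in degree $0$; therefore so is $\mbf{R}\pi_*\RlHom_{\X}(\M,\omega_{\X})$. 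On the smooth surface $\X$ the obstruction to $\M$ being locally free is precisely the finite-length sheaf $\lExt^1_{\X}(\M,\omega_{\X})$, and in the Leray spectral sequence $R^p\pi_*\lExt^q_{\X}(\M,\omega_{\X})\Rightarrow\mathcal{H}^{p+q}$ the term $\pi_*\lExt^1_{\X}(\M,\omega_{\X})$ survives to $E_\infty$ (its only possible differential lands in $R^2\pi_*\lHom_{\X}(\M,\omega_{\X})=0$). As the total $\mathcal{H}^1$ vanishes, $\pi_*\lExt^1_{\X}(\M,\omega_{\X})=0$, and a nonzero finite-length sheaf has nonzero pushforward, so $\lExt^1_{\X}(\M,\omega_{\X})=0$ and $\M$ is locally free. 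Condition (3) then follows from the same $\mathcal{H}^1$-comparison as in the converse, now reading off $R^1\pi_*(\M^{\vee}\otimes\omega_{\X})\cong\lExt^1_X(M,\omega_X)=0$.

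It remains to record the two identities. That $\mbf{R}\pi_*\M=M$ was established in both directions above. For $\pi_*(\M^{\vee})=M^{\vee}$, the natural map $\pi_*(\M^{\vee})\to\lHom_X(\pi_*\M,\OO_X)=M^{\vee}$ induced by $\pi_*\OO_{\X}=\OO_X$ is an isomorphism over the smooth locus, so since $M^{\vee}$ is reflexive it is enough to prove that $\pi_*(\M^{\vee})$ is reflexive. Here I use the auxiliary vanishing from the first paragraph with $G=M$, namely $R^1\pi_*(\pi^*M\otimes\omega_{\X})=0$; as $\M\otimes\omega_{\X}$ is a quotient of $\pi^*M\otimes\omega_{\X}$ with $R^2\pi_*$ of the kernel zero, this yields $R^1\pi_*(\M\otimes\omega_{\X})=0$. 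Feeding the locally free sheaf $\M^{\vee}$ into the duality isomorphism then gives $\RlHom_X(\mbf{R}\pi_*\M^{\vee},\omega_X)\cong\pi_*(\M\otimes\omega_{\X})$ concentrated in degree $0$, and chasing the corresponding spectral sequence forces $\lExt^1_X(\pi_*\M^{\vee},\omega_X)=0$, so that $\pi_*(\M^{\vee})$ is reflexive. I expect the main obstacle of the whole proof to be the correct formulation of relative Grothendieck duality with $\pi^{!}\omega_X\cong\omega_{\X}$ in arbitrary characteristic, since it is here that rationality of the singularities is indispensable and the usual characteristic-zero vanishing theorems are unavailable; the local-freeness step (1) is the most delicate of the remaining equivalences.
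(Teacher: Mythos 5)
Your argument is correct, and for the core equivalence it travels the same road as the paper: relative Grothendieck duality in the form $\mbf{R}\pi_*\RlHom_{\X}(-,\omega_{\X})\cong\RlHom_X(\mbf{R}\pi_*(-),\omega_X)$, the projection-formula spectral sequence giving $\mbf{R}^1\pi_*\pi^*G=0$ (and its $\omega_{\X}$-twist), the depth/local-duality criterion for reflexivity on the Cohen--Macaulay surface $X$, and faithfulness of $\pi_*$ on finite-length sheaves to convert $\pi_*\lExt^1_{\X}(\M,\omega_{\X})=0$ into local freeness of $\M$. You genuinely diverge in two places, both to good effect. In the converse direction you first deduce $\mbf{R}^1\pi_*\M=0$ from condition (2) together with $\mbf{R}^1\pi_*\pi^*(\pi_*\M)=0$, so that duality compares two honest sheaves and reads off $\lExt^1_X(\pi_*\M,\omega_X)\cong\mbf{R}^1\pi_*(\M^{\vee}\otimes\omega_{\X})=0$ directly; the paper instead dualizes the possibly two-term complex $\mbf{R}\pi_*\M$ and must kill $\lExt^3_X(\mbf{R}^1\pi_*\M,\omega_X)$ using the injective-dimension bound on $\omega_X$. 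Your version is cleaner. For the final identity $\pi_*(\M^{\vee})=M^{\vee}$ your route differs from the paper's: you reduce to reflexivity of $\pi_*(\M^{\vee})$ via the natural comparison map to $M^{\vee}$ (an isomorphism off $X_{\sing}$ between what would be two reflexive sheaves), and obtain that reflexivity from duality applied to $\M^{\vee}$ plus the auxiliary vanishing $\mbf{R}^1\pi_*(\M\otimes\omega_{\X})=0$. The paper instead computes $M^{\vee}$ as $\HH^0\mbf{R}\pi_*\RlHom(L\pi^*M,\OO_{\X})$ via $\mbf{R}\pi_*(L\pi^*M\otimes\omega_{\X})\cong M\otimes\omega_X$ and two hypercohomology spectral sequences; your route bypasses the paper's identification $\RlHom(M,\OO_X)\cong\RlHom(M\otimes\omega_X,\omega_X)$, which is a point of some delicacy when $\omega_X$ is not a line bundle.

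Two details to tighten. First, $\pi^{!}\omega_X\cong\omega_{\X}$ is not ``what $\mbf{R}\pi_*\omega_{\X}\cong\omega_X$ says'': it holds for \emph{any} resolution of the projective CM surface $X$, by compatibility of upper shriek with composition (the dualizing complex of $\X$ is $\pi^{!}$ of that of $X$), independently of rationality. Rationality enters exactly in your recurring vanishings $\mbf{R}^1\pi_*\pi^*G=0$ and $\mbf{R}^1\pi_*(\pi^*G\otimes\omega_{\X})=0$, which use $\mbf{R}\pi_*\OO_{\X}\cong\OO_X$ and $\mbf{R}\pi_*\omega_{\X}\cong\omega_X$ respectively. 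Second, in the last step you cannot invoke ``the same $\HH^1$-comparison as in the converse'' verbatim: unlike $\M$, the dual $\M^{\vee}$ is not exhibited as a quotient of a pullback, so $\mbf{R}^1\pi_*(\M^{\vee})$ need not vanish, and the chase forcing $\lExt^1_X(\pi_*(\M^{\vee}),\omega_X)=0$ requires killing the differential into $\lExt^3_X(\mbf{R}^1\pi_*(\M^{\vee}),\omega_X)$ by the fact that $\omega_X$ has injective dimension $2$ --- precisely the input the paper takes from Eisenbud. Making that explicit completes the proof.
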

\begin{proof}
Let $M$ be a reflexive sheaf on $X$ and let us verify that $\M=\prs{\pi^*M}_{\tf}$ satisfies the conditions (1)-(3) of the proposition.
Because $X$ has rational singularities, the projection formula gives $$\mbf{R}\pi_*L\pi^*M\cong M\otimes \mbf{R}\pi_*(\OO_{\X})\cong M\otimes\OO_X=M.$$ 
We use the fourth quadrant spectral sequence $$E_2^{p,q}=\mbf{R}^p\pi_*L^q\pi^*M\Rightarrow \HH^{p+q}(\mbf{R}\pi_*L\pi^*M)=\HH^{p+q}(M)=
\begin{cases}
M,& p+q=0\\
0,& \text{otherwise}
\end{cases}.$$
Since $E_2^{p,q}=0$ for $p>1$ as the fibers of $\pi$ are at most one dimensional, the spectral sequence degenerates at the $E_2$-page.
Thus it follows immediately that $\mbf{R}^1\pi_*\pi^*M=0$, so there exists a short exact sequence $$0\to \mbf{R}^1\pi_*L^{-1}\pi^*M\to M\to\pi_*\pi^*M\to 0.$$
Since $\mbf{R}^1\pi_*F$ is supported in $X_{\sing}$ for any $F\in\Coh(\X)$, it must be a torsion sheaf, and as $M$ is reflexive, it is in particular torsion-free, so $\mbf{R}^1\pi_*L^{-1}\pi^*M=0$.  Thus $\pi_*\pi^*M=M$.

Now let $T:=(\pi^*M)_{\tor}$ be the torsion part of $\pi^*M$, and consider the torsion-free sheaf $\M$ fitting into the short exact sequence \begin{equation}\label{eqn: torsion exact sequence}0\to T\to \pi^*M\to \M\to 0.\end{equation}
Pushing forward this exact sequence, we get \begin{equation}\label{eqn: sequence on low order terms}0\to \pi_*T\to\pi_*\pi^*M\to\pi_*\M\to \mbf{R}^1\pi_*T\to \mbf{R}^1\pi_*\pi^*M\to \mbf{R}^1\pi_*\M\to 0.\end{equation}
From above we have $\pi_*\pi^*M=M$ and $\mbf{R}^1\pi_*\pi^*M=0$, so $\mbf{R}^1\pi_*\M=0$ and $\pi_*T$ is a subsheaf of $M$.
But as $T$ must be supported above the singular locus $X_{\sing}$, $\pi_*T$ is torsion and thus $\pi_*T=0$ from the torsion-freeness of $M$. So the sequence \eqref{eqn: sequence on low order terms} becomes $$0\to M\to\pi_*\M\to \mbf{R}^1\pi_*T\to 0.$$  Because $\M$ is torsion-free, $\pi_*\M$ is as well, so we get \begin{equation}\label{eqn: surface reflexive dual sequence}M\into\pi_*\M\into (\pi_*\M)^{\vee\vee},\end{equation} which becomes an isomorphism when restricted to $X-X_{\sing}$.  As both $M$ and $(\pi_*\M)^{\vee\vee}$ are reflexive and $\codim X_{\sing}\geq 2$, the composition \eqref{eqn: surface reflexive dual sequence} is in fact an isomorphism.  Thus $\mbf{R}^1\pi_*T=0$ and $\pi_*\M=M$, which implies that $\mbf{R}\pi_*\M=M$ in $\Db(X)$.
This immediately gives condition (2) of the proposition since $\M$ is by definition a quotient of $\pi^*\pi_*\M=\pi^*M$.

To see conditions (1) and (3) of the proposition, we use Grothendieck duality and the fact that $\mbf{R}\pi_*\M=M$ to get $$\mbf{R}\pi_*(\RlHom(\M,\omega_{\X}))\cong\RlHom(\mbf{R}\pi_*\M,\omega_X)\cong\RlHom(M,\omega_X).$$  Now $M$ being reflexive is equivalent to $\HH_Y^1(M)=0$ for every closed $Y\subset X$ with $\codim Y\geq 2$, where $\HH_Y^1$ denotes the local cohomology sheaf \cite[Proposition 1.6(iv)]{Har80}.  Moreover, by the relation between depth and the vanishing of the local cohomology sheaf \cite[Theorem 3.8]{Har67}, we see that this vanishing is equivalent to $\depth_Y(M)\geq 2$ for all $Y$ as above.  As rational singularities are CM \cite[Proposition 2.77]{Kol13}, we may use \cite[Proposition 21.12]{Eis95} to get that for every closed point $x\in X$, $\lExt^i(M,\omega_X)_x\cong\Ext^i_{\OO_x}(M_x,\omega_{X,x})=0$ for all $i>\dim(\OO_x)-\depth_{\mathfrak m_x}(M_x)=0$.  Thus $\lExt^1(M,\omega_X)=0$ so that $\HH^1(\mbf{R}\pi_*(\RlHom(\M,\omega_{\X}))=0$ as well.  Now using the spectral sequence $$E_2^{p,q}=\mbf{R}^p\pi_*(\lExt^q(\M,\omega_{\X}))\Rightarrow\HH^{p+q}(\mbf{R}\pi_*(\RlHom(\M,\omega_{\X}))),$$ we see that this implies that $\mbf{R}^1\pi_*(\M^{\vee}\otimes\omega_{\X})=0$ (condition (3) of the proposition) and that $\pi_*(\lExt^1(\M,\omega_{\X}))=0$.  But as $\X$ is smooth and $\M$ is torsion-free, $\codim\lExt^1(\M,\omega_{\X})\geq 2$.  The vanishing of $\pi_*(\lExt^1(\M,\omega_{\X}))$ implies that $\lExt^1(\M,\omega_{\X})=0$ as well.  Thus $\M$ is reflexive, and as $\X$ is a smooth surface, it must be locally free as well, completing the proof of the ``only if'' direction of the proposition.

In the converse direction, suppose that $\M$ satisfies conditions (1)--(3) of the proposition.  Since $\M$ is locally free, we use Grothendieck duality to get $$\mbf{R}\pi_*(\M^{\vee}\otimes\omega_{\X})\cong \mbf{R}\pi_*(\RlHom(\M,\omega_{\X}))\cong \RlHom(\mbf{R}\pi_*\M,\omega_X),$$ and using the spectral sequence $$E_2^{p,q}=\lExt^p(\HH^{-q}(\mbf{R}\pi_*\M),\omega_X)=\lExt^p(\mbf{R}^{-q}\pi_*\M,\omega_X)\Rightarrow\lExt^{p+q}(\mbf{R}\pi_*\M,\omega_X),$$ the vanishing of $\mbf{R}^1\pi_*(\M^{\vee}\otimes\omega_{\X})$ gives $\ker(d_2^{1,0})=0$ so $$\lExt^1(\pi_*\M,\omega_X)\into\lExt^3(\mbf{R}^1\pi_*\M,\omega_X).$$  But by \cite[Proposition 21.12(a)]{Eis95}, $\lExt^3(\mbf{R}^1\pi_*\M,\omega_X)=0$ since the injective dimension of $\omega_X$ is 2.  So $\lExt^1(\pi_*\M,\omega_X)=0$, and thus by \cite[Corollary 3.5.11(b)]{BH93}, $1>2-\depth_x(\pi_*\M)$ at any closed point $x\in X$, i.e. $\depth_x(\pi_*\M)\geq 2$, so $\pi_*\M$ is reflexive.  

Since $\pi^*\pi_*\M\to\M$ is surjective, and an isomorphism away from the exceptional locus, the kernel must be torsion, so letting $M=\pi_*\M$, we see that $\M=\pi^*M/(\pi^*M)_{\tor}$, as required.

Finally, we prove that $\pi_*(\M^{\vee})=M^{\vee}$.  First note that by \cref{Def:RationalSingularities}, it follows that $$\mbf{R}\pi_*(L\pi^*M\otimes\omega_{\X})\cong M\otimes \mbf{R}\pi_*(\omega_{\X})\cong M\otimes \omega_X.$$  Furthermore, using Grothendieck duality we get \begin{align}
\begin{split}
M^{\vee}&=\lHom(M,\OO_X)=\HH^0\RlHom(M,\OO_X)=\HH^0\RlHom(M\otimes\omega_X,\omega_X)\\
&=\HH^0\RlHom(\mbf{R}\pi_*(L\pi^*M\otimes\omega_{\X}),\omega_X)\cong\HH^0\mbf{R}\pi_*\RlHom(L\pi^*M\otimes\omega_{\X},\omega_{\X})\\
&=\HH^0\mbf{R}\pi_*\RlHom(L\pi^*M,\OO_{\X}).
\end{split}
\end{align}

The spectral sequence $$E_2^{p,q}=\mbf{R}^p\pi_*\HH^q\RlHom(L\pi^*M,\OO_{\X})\Rightarrow\HH^{p+q}\mbf{R}\pi_*\RlHom(L\pi^*M,\OO_{\X})$$ gives the short exact sequence $$0\to \mbf{R}^1\pi_*\HH^{-1}\RlHom(L\pi^*M,\OO_{\X})\to M^{\vee}\to\pi_*\HH^0\RlHom(L\pi^*M,\OO_{\X})\to 0,$$
while the spectral sequence $$E_2^{p,q}=\lExt^p(L^{-q}\pi^*M,\OO_{\X})\Rightarrow\HH^{p+q}\RlHom(L\pi^*M,\OO_{\X})$$ shows that $\HH^{-1}\RlHom(L\pi^*M,\OO_{\X})=0$ and $\HH^0\RlHom(L\pi^*M,\OO_{\X})=(\pi^*M)^{\vee}$.  Thus $M^{\vee}\cong\pi_*((\pi^*M)^{\vee})$, and applying $\lHom(-,\OO_{\X})$ to the short exact sequence \eqref{eqn: torsion exact sequence} gives $(\pi^*M)^{\vee}\cong\M^{\vee}$, so $$\pi_*(\M^{\vee})\cong M^{\vee},$$ as required.
\end{proof}

\section{The Bogomolov-Gieseker-Koseki Inequality on singular surfaces}

\subsection{The Bogomolov-Gieseker-Koseki Inequality on smooth surfaces}
Let us recall Koseki's modified version of the Bogomolov-Gieseker inequality for smooth surfaces in arbitrary characteristic.
\begin{Thm}[{\cite[Theorem 3.5]{koseki:bg-pos}}]\label{Thm:BG-smooth}
Let $S$ be a smooth projective surface defined over an algebraically closed field $k$. Then there exists a constant $C_S\geq 0$, depending only on the characteristic of $k$ and the birational equivalence class of $S$, satisfying the following condition: for every numerically non-trivial nef divisor $H$ on $S$ and $\mu_H$-semistable torsion free sheaf $\EE \in\Coh(S)$ on $S$, the inequality
$$\Delta(\EE) + C_S \ch_0(\EE)^2\geq 0$$ holds. Explicitly, we can take the constant $C_S$ as follows:
\begin{enumerate}
    \item If $\Char k>0$ and $S$ is a minimal surface of general type, then $C_S=2+5K_S^2-\chi\prs{\OO_S}$.
    \item If $\Char k>0$, $\kappa\prs{S}=1$, and $S$ is quasi-elliptic, then $C_S=2-\chi(\OO_S)$
    \item Otherwise, set $C_S=0$.
\end{enumerate}
\end{Thm}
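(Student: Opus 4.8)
The plan is to follow the Frobenius-theoretic strategy of Langer, refined so as to extract the explicit constants. First I would reduce to the case of a $\mu_H$-stable locally free sheaf. Passing to the reflexive hull $\EE^{\vee\vee}$ preserves $\mu_H$-semistability and can only decrease $\Delta$ (since $\ch_1$ is unchanged while $\ch_2$ can only drop), and on a smooth surface a reflexive sheaf is locally free; thus it suffices to treat locally free $\EE$. Moreover the Jordan--Hölder factors of a $\mu_H$-semistable sheaf all share its slope, so for any two such factors the difference $\ch_1(E')/\rk(E')-\ch_1(E'')/\rk(E'')$ is $H$-orthogonal and hence has non-positive self-intersection by the Hodge index theorem. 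The resulting convexity of the normalized discriminant $\Delta/\ch_0$ reduces the inequality for $\EE$ to the same inequality for its stable factors. In characteristic zero the classical Bogomolov argument then applies verbatim and gives $C_S=0$: restricting to a general high-degree curve via Mehta--Ramanathan and feeding the numerical data into Riemann--Roch forces $\Delta(\EE)\geq 0$ from the simplicity of a stable sheaf.

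The genuinely positive-characteristic input is that the Frobenius $F\colon S\to S$ need not preserve $\mu_H$-semistability, which is exactly why \eqref{eqn:BG} can fail. To control this I would introduce the asymptotic Frobenius slopes $L_{\max}(\EE)=\lim_k\mu_{\max}(F^{k*}\EE)/p^k$ and $L_{\min}(\EE)$ defined symmetrically, and invoke Langer's effective restriction theorem, which yields a Bogomolov-type bound of the shape $\Delta(\EE)\geq -\gamma_S\,\ch_0(\EE)^2$ with $\gamma_S$ governed by the Frobenius instability $L_{\max}-L_{\min}$. The destabilizing subquotients of $F^{k*}\EE$ are built from the Harder--Narasimhan filtration of $F_*\OO_S$, so that $L_{\max}-L_{\min}$ is bounded by the instability of the cotangent sheaf $\Omega_S$ with respect to $H$. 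The crucial feature is that this bound is numerical: it is controlled by the gap between $\mu_{\max}(\Omega_S)$ and $\mu_{\min}(\Omega_S)$, which in turn is expressible through $K_S^2$ and $\chi(\OO_S)$ via Riemann--Roch and positive-characteristic vanishing.

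This reduces the problem to a case analysis by Kodaira dimension. When $\Omega_S$ is $\mu_H$-semistable---which holds for every surface that is neither of general type nor quasi-elliptic of Kodaira dimension one---the Frobenius instability vanishes asymptotically and one recovers $C_S=0$. For a minimal surface of general type the instability of $\Omega_S$ is bounded explicitly, and tracking the constants through Langer's inequality produces $C_S=2+5K_S^2-\chi(\OO_S)$; the quasi-elliptic case with $\kappa(S)=1$ is handled by a parallel but separate estimate giving $C_S=2-\chi(\OO_S)$. Finally, to see that a single constant serves every numerically non-trivial nef $H$, not merely the ample ones, and depends only on the birational class, I would pass to the minimal model and argue by perturbation: for a bounded family of potential destabilizers the walls in the nef cone are locally finite, so $\mu_H$-semistability for nef $H$ is inherited from nearby ample classes, while $\Delta(\EE)$ and $\ch_0(\EE)^2$ are manifestly independent of the polarization.

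The main obstacle is the content of the second paragraph: making the bound on the Frobenius instability $L_{\max}-L_{\min}$ both effective and expressible purely in terms of $K_S^2$ and $\chi(\OO_S)$. This is where the geometry of $S$ genuinely enters, through a careful analysis of the Harder--Narasimhan filtration of $F_*\OO_S$ and of the slopes of subsheaves of the tensor powers $\Omega_S^{\otimes m}$, using vanishing statements that are delicate in positive characteristic. Pinning down the precise numerical constants in the general-type and quasi-elliptic cases---rather than merely some finite bound---is the most technical part of the argument.
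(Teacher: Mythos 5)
This theorem is not proved in the paper at all: it is Koseki's result, quoted verbatim from \cite[Theorem 3.5]{koseki:bg-pos}, and the paper's entire ``proof'' is that citation. So your proposal can only be measured against Koseki's own argument, and at the level of architecture you have reconstructed it reasonably well: the reduction to stable factors via convexity of $\Delta/\ch_0$ is exactly \cref{lemma:bg-invariance}, the statement that one polarization suffices is \cref{proposition:invariance}, and the case division by Kodaira dimension with Langer's Frobenius machinery in the hard cases is indeed where the explicit constants come from. The genuine gap is in your third paragraph. The claim that ``$\Omega_S$ is $\mu_H$-semistable for every surface that is neither of general type nor quasi-elliptic with $\kappa(S)=1$'' is false: on $S=\P^1\times E$ with $E$ an elliptic curve, the subsheaf $\pi^*\Omega_E\cong\OO_S\subset\Omega_S$ has slope $0$ while $\mu_H(\Omega_S)=\tfrac12 K_S.H<0$ for every polarization, so $\Omega_S$ is unstable; an asymmetrically polarized $\P^1\times\P^1$ is another counterexample. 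Nor can this be repaired by replacing semistability with the condition that actually controls Frobenius instability, namely $L_{\max}(\Omega_S)\leq 0$: a properly elliptic (non-quasi-elliptic) surface fibered over a base of genus $g\geq 2$ has $\mu_{\max}(\Omega_S)\geq (2g-2)\,f.H>0$, yet Koseki's theorem still asserts $C_S=0$ for it. The true input for the $C_S=0$ case in positive characteristic is the theorem of Shepherd-Barron (rank two) and Langer that a surface carrying a $\mu_H$-semistable sheaf with $\Delta<0$ must be of general type or quasi-elliptic; its proof goes through the $1$-foliation, equivalently the purely inseparable cover, attached to a destabilizing subsheaf of a Frobenius pullback, not through any (in)stability property of $\Omega_S$ itself. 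Without that input your case (3) collapses precisely on the surfaces where it is hardest.

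Two further points. The explicit constants are asserted rather than derived: ``tracking the constants through Langer's inequality produces $C_S=2+5K_S^2-\chi(\OO_S)$'' is exactly the technical core of Koseki's paper (uniform bounds on the asymptotic Frobenius instability on minimal surfaces of general type and on quasi-elliptic surfaces, with the worst cases $p=2,3$), and your proposal explicitly defers it, so the statement being proved is never actually reached. And your argument for arbitrary numerically non-trivial nef $H$ by perturbation is not sound as stated: a sheaf that is $\mu_H$-semistable for a class on the boundary of the nef cone need not be $\mu_{H'}$-semistable for any nearby ample $H'$, and the family of potential destabilizers is not obviously bounded, so ``local finiteness of walls'' has nothing to stand on; Koseki's \cref{proposition:invariance} instead runs an induction on the rank using the convexity lemma. (A small slip in your first reduction: passing to $\EE^{\vee\vee}$ \emph{increases} $\ch_2$ by the length of $\EE^{\vee\vee}/\EE$, which is exactly why $\Delta$ decreases; the reduction itself is fine.)
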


In the course of proving the above theorem, Koseki establishes some results that will be of use to us.

\begin{Lem}[{\cite[Lemma 3.7]{koseki:bg-pos}}] \label{lemma:bg-invariance}
Let $S$ be a smooth projective surface, $L$ a numerically non-trivial nef divisor. Suppose that we have an exact sequence
\[0 \to \mcal{E}_1 \to \mcal{E} \to \mcal{E}_2 \to 0\]
of torsion-free sheaves on $S$ with $\mu_L\prs{\mcal{E}_1} = \mu_L\prs{\mcal{E}} = \mu_L\prs{\mcal{E}_2}$. Then we have
\[\frac{\Delta\prs{\mcal{E}}}{\ch_0\prs{\mcal{E}}} \geq \frac{\Delta\prs{\mcal{E}_1}}{\ch_0\prs{\mcal{E}_1}} + \frac{\Delta\prs{\mcal{E}_2}}{\ch_0\prs{\mcal{E}_2}} \text{.}\]
\end{Lem}

For a positive integer $r$, we say $T^1\prs{r}$ holds for a pair $(S,L)$ of a smooth projective surface and a numerically non-trivial nef divisor $L$ if \cref{Thm:BG-smooth} holds for $(S,L)$ whenever $\ch_0\prs{\mcal{E}} \leq r$.

\begin{Prop}[{\cite[Proposition 3.8]{koseki:bg-pos}}]\label{proposition:invariance}
Let $S$ be a smooth projective surface and $r \geq 2$ an integer. Suppose that $T^1\prs{r}$ holds for $(S,L)$ for some numerically non-trivial nef divisor $L$. Then it also holds for $(S,L')$ for every choice of numerically non-trivial nef divisor $L'$.
\end{Prop}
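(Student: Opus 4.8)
The plan is to argue by induction on the rank $r=\ch_0(\mcal E)$, where the base case $r=1$ is immediate: a $\mu$-semistable torsion-free sheaf of rank one satisfies $\Delta\geq 0$ irrespective of the polarization, and $C_S\geq 0$. For the inductive step I would first extract a more usable form of the hypothesis. Since $T^1(r)$ for $(S,L)$ trivially implies $T^1(r')$ for $(S,L)$ for every $r'<r$, applying the proposition in ranks $<r$ (the inductive hypothesis) yields that the inequality of \cref{Thm:BG-smooth} holds for \emph{every} $\mu_M$-semistable torsion-free sheaf of rank $<r$ and \emph{every} numerically non-trivial nef divisor $M$, always with the same constant $C_S$. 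This is the statement I will actually invoke.

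Now fix a numerically non-trivial nef $L'$ and a $\mu_{L'}$-semistable torsion-free sheaf $\mcal E$ with $\ch_0(\mcal E)\leq r$; we may assume $\ch_0(\mcal E)=r$. If $\mcal E$ is also $\mu_L$-semistable, the desired inequality is exactly $T^1(r)$ for $(S,L)$. Otherwise the heart of the argument is to locate an intermediate polarization at which $\mcal E$ is strictly semistable. Consider the segment $L_t:=(1-t)L+tL'$, $t\in[0,1]$, which consists of numerically non-trivial nef divisors, since the nef cone is convex and $L_t\cdot A>0$ for every ample $A$. I would set $t^*:=\inf\{\,t\in[0,1] : \mcal E \text{ is } \mu_{L_t}\text{-semistable}\,\}$. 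Using boundedness of the possible maximal destabilizing subsheaves of $\mcal E$ (of bounded rank and slope as $t$ ranges over $[0,1]$), the function $t\mapsto \mu_{L_t}(\mcal F)-\mu_{L_t}(\mcal E)$ is affine in $t$ for each relevant subsheaf $\mcal F$, so semistability is a closed condition and stability an open one along the segment. As $\mcal E$ is $\mu_{L'}$-semistable but not $\mu_L$-semistable, it follows that $t^*\in(0,1]$, that $\mcal E$ is $\mu_{L''}$-semistable for $L'':=L_{t^*}$, and that $\mcal E$ is \emph{not} $\mu_{L''}$-stable. Hence there is a proper nonzero saturated subsheaf $\mcal E_1\subset\mcal E$ with $\mu_{L''}(\mcal E_1)=\mu_{L''}(\mcal E)$; the torsion-free quotient $\mcal Q:=\mcal E/\mcal E_1$ is then $\mu_{L''}$-semistable of the same slope by semistability of $\mcal E$, and both $\ch_0(\mcal E_1),\ch_0(\mcal Q)<r$.

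With this equal-slope short exact sequence $0\to\mcal E_1\to\mcal E\to\mcal Q\to 0$ for the polarization $L''$, I would apply \cref{lemma:bg-invariance} to obtain
$$\frac{\Delta(\mcal E)}{\ch_0(\mcal E)}\geq \frac{\Delta(\mcal E_1)}{\ch_0(\mcal E_1)}+\frac{\Delta(\mcal Q)}{\ch_0(\mcal Q)}.$$
Because $\mcal E_1$ and $\mcal Q$ are $\mu_{L''}$-semistable torsion-free sheaves of rank $<r$, the reduced form of the inductive hypothesis gives $\Delta(\mcal E_1)\geq -C_S\ch_0(\mcal E_1)^2$ and $\Delta(\mcal Q)\geq -C_S\ch_0(\mcal Q)^2$, so the right-hand side is at least $-C_S\bigl(\ch_0(\mcal E_1)+\ch_0(\mcal Q)\bigr)=-C_S\ch_0(\mcal E)$. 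Multiplying through by $\ch_0(\mcal E)$ yields $\Delta(\mcal E)+C_S\ch_0(\mcal E)^2\geq 0$, completing the induction.

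I expect the main obstacle to be the existence of the intermediate polarization $L''$: the claim that, moving the polarization from $L'$ toward $L$, one meets a genuine numerically non-trivial nef divisor at which $\mcal E$ is strictly semistable, with both a semistable subsheaf and a semistable quotient of equal slope. This is precisely where boundedness of destabilizing subsheaves and the semicontinuity of (semi)stability along the segment must be invoked with care. It is worth stressing why this two-step reduction is necessary, rather than simply summing over the full $\mu_L$-Harder--Narasimhan filtration of $\mcal E$: the discriminant additivity formula produces cross terms $-\tfrac1r\sum_{i<j} r_i r_j\,(\mu_i-\mu_j)^2$ in the N\'eron--Severi lattice, where $\mu_i:=\ch_1(\mcal G_i)/\ch_0(\mcal G_i)$ are the slope classes of the factors $\mcal G_i$, and although $\mu_{L'}$-semistability together with the decreasing $L$-slopes controls the \emph{signs} of $L\cdot(\mu_i-\mu_j)$ and $L'\cdot(\mu_i-\mu_j)$, it does \emph{not} force every self-intersection $(\mu_i-\mu_j)^2$ to be non-positive once there are three or more factors. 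Only by collapsing to a single extension with \emph{equal} $L''$-slopes can the Hodge index theorem underlying \cref{lemma:bg-invariance} be applied, since then the relevant difference class lies in $L''^{\perp}$ and hence has non-positive self-intersection.
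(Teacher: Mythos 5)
Your proposal is essentially the argument of Koseki's original proof of this result, which the paper itself does not reprove but simply cites as \cite[Proposition 3.8]{koseki:bg-pos}: induction on the rank, location of a critical polarization $L''=L_{t^*}$ on the nef segment where $\mcal{E}$ becomes semistable but not stable, and application of the equal-slope discriminant inequality of \cref{lemma:bg-invariance} to the resulting subsheaf/quotient pair together with the inductive hypothesis. The one technical point you flag---that the infimum $t^*$ is attained by an honest saturated subsheaf of equal $L''$-slope, which requires boundedness of slopes of saturated subsheaves with respect to the (merely nef) divisors $L_t$ and the resulting finiteness of relevant walls---is precisely the point handled in Koseki's proof, and your treatment of it is correct.
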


\subsection{Relating invariants upstairs and downstairs}

Now suppose that $X$ is a projective surface with at worst rational singularities and let $\pi\colon\X\to X$ be its minimal resolution.  

\begin{Prop}\label{proposition:pushforward}
Let $\mcal{E} \in \Coh\prs{\X}$ be torsion-free and $H$ a numerically non-trivial nef divisor on $X$.
The following statements hold.

\begin{enumerate}
    \item The sheaf $\pi_*\prs{\mcal{E}} \in \Coh\prs{X}$ is torsion-free, and the sheaf $\mathbf{R}^1\pi_*\prs{\mcal{E}}$ is supported on a zero-dimensional subscheme.

    \item We have $\ch_0\prs{\pi_*\mcal{E}} = \ch_0\prs{\mcal{E}}$. \label{enumerate:psuhforward-ch0}

    \item We have $H .\ch_1\prs{\pi_*\mcal{E}} = \pi^* H. \ch_1\prs{\mcal{E}}$. \label{enumerate:pushforward-ch1}

    \item If $\mcal{F} \in \Coh\prs{X}$ is reflexive and $\mu_H$-semistable, then $\prs{\pi^* \mcal{F}}_{\tf}$ is $\mu_{\pi^*H}$-semistable.

\end{enumerate}
\end{Prop}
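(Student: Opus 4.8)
The plan is to establish the four parts in order, treating (1)--(3) as technical inputs and reserving the real content for (4), whose proof then becomes a short slope comparison. Throughout I use that $X$, having rational singularities, is normal and Cohen--Macaulay, and that $\pi$ is an isomorphism over $X\setminus X_{\sing}$.

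For part (1), torsion-freeness of $\pi_*\mcal{E}$ is immediate from \cref{lemma:pushforward-tf}, since $\pi$ is a dominant (indeed birational) morphism of integral schemes and $\mcal{E}$ is torsion-free. For the support claim, normality of $X$ forces $X_{\sing}$ to have codimension $\geq 2$ in the surface $X$, hence to be a finite set of points; as $\pi$ is an isomorphism over $X\setminus X_{\sing}$, the sheaf $\mbf{R}^1\pi_*\mcal{E}$ vanishes there and is therefore supported on the zero-dimensional set $X_{\sing}$. For part (2), the rank is computed at the generic point of $X$, over which $\pi$ restricts to an isomorphism; thus $\pi_*\mcal{E}$ and $\mcal{E}$ have equal rank, i.e. $\ch_0(\pi_*\mcal{E})=\ch_0(\mcal{E})$.

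For part (3) I would invoke Grothendieck--Riemann--Roch for the proper morphism $\pi$ in its Chow-group form. Its codimension-one part yields $\ch_1(\pi_*\mcal{E})=\pi_*\ch_1(\mcal{E})$ in $\CH^1(X)$: by part (1) the sheaf $\mbf{R}^1\pi_*\mcal{E}$ is supported in dimension zero and so contributes only in codimension two, and the Todd corrections likewise live in codimension $\geq 2$. The projection formula $\pi_*(\pi^*H\cdot\alpha)=H\cdot\pi_*\alpha$ for $\alpha\in\CH^1(\X)$, combined with the fact that proper pushforward preserves the degree of a $0$-cycle, then gives $H\cdot\ch_1(\pi_*\mcal{E})=\pi^*H\cdot\ch_1(\mcal{E})$; since the identity is linear and depends only on the numerical class of $H$, the nef hypothesis causes no difficulty.

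The substance is part (4). Write $\GG:=\prs{\pi^*\mcal{F}}_{\tf}$; by \cref{Prop:SheafCorrespondence}, since $\mcal{F}$ is reflexive, $\GG$ is locally free and $\pi_*\GG=\mcal{F}$, so parts (2) and (3) give $\mu_H(\mcal{F})=\mu_{\pi^*H}(\GG)$. Suppose for contradiction that $\GG$ is not $\mu_{\pi^*H}$-semistable, and let $\GG'\subsetneq\GG$ be its maximal destabilizing subsheaf, so $0<\rank\GG'<\rank\GG$ and $\mu_{\pi^*H}(\GG')>\mu_{\pi^*H}(\GG)$. Applying $\pi_*$ to $0\to\GG'\to\GG\to\GG/\GG'\to 0$ and using left-exactness produces a subsheaf $\pi_*\GG'\hookrightarrow\pi_*\GG=\mcal{F}$, which is torsion-free by \cref{lemma:pushforward-tf} and, by part (2), nonzero of rank $\rank\GG'<\rank\mcal{F}$. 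Parts (2) and (3) applied to $\GG'$ then yield
\[\mu_H(\pi_*\GG')=\frac{\pi^*H\cdot\ch_1(\GG')}{\ch_0(\GG')}=\mu_{\pi^*H}(\GG')>\mu_{\pi^*H}(\GG)=\mu_H(\mcal{F}),\]
so $\pi_*\GG'$ is a proper subsheaf of $\mcal{F}$ of strictly larger slope, contradicting the $\mu_H$-semistability of $\mcal{F}$.

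The hard part will be part (3): making the Chern-class compatibility $\ch_1(\pi_*\mcal{E})=\pi_*\ch_1(\mcal{E})$ rigorous over the \emph{singular} base $X$, where one must fix the correct Riemann--Roch/intersection-theory formalism (Chow groups with the embedding $X\hookrightarrow P$ used to define $\ch_i$) and verify that every discrepancy term is supported in codimension two and hence drops out after intersecting with $H$. Once parts (2) and (3) are secured, part (4) is essentially formal, as the slope identities above make the pushforward of the Harder--Narasimhan destabilizer a genuine destabilizer of $\mcal{F}$.
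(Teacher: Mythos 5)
Parts (1) and (2) of your proposal coincide with the paper's proof, and your part (4) is the paper's argument in contrapositive form: both push a (de)stabilizing subsheaf $\GG'\subseteq\prs{\pi^*\FF}_{\tf}$ down to $\pi_*\GG'\subseteq\pi_*\prs{\prs{\pi^*\FF}_{\tf}}=\FF$ (via \cref{Prop:SheafCorrespondence}) and compare slopes using (2) and (3). The divergence --- and the gap --- is in part (3). The paper never invokes Grothendieck--Riemann--Roch there: it writes the counit sequence $0\to K\to\pi^*\pi_*\EE\to\EE\to C\to 0$ on the smooth surface $\X$, notes that $K$ and $C$ are supported on the exceptional locus so that $\pi^*H.\ch_1\prs{K}=\pi^*H.\ch_1\prs{C}=0$, and finishes with the projection formula $\pi^*H.\pi^*\ch_1\prs{\pi_*\EE}=\deg\prs{\pi}\,H.\ch_1\prs{\pi_*\EE}$. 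All Chern-character manipulations take place upstairs, where $\X$ is smooth, which is exactly why the statement can be made under the rational-singularities hypothesis alone.

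Your GRR route, as sketched, does not go through for two reasons. First, the justification ``the Todd corrections likewise live in codimension $\geq 2$'' is false as stated: the Todd classes of both $\X$ and $X$ have nonzero codimension-one terms ($-K_{\X}/2$, respectively the codimension-one part of $\Td\prs{X}$), so they do not simply drop out; what one needs is that these terms cancel after pushforward, i.e.\ that the codimension-one part of $\Td\prs{X}$ equals $\pi_*\prs{-K_{\X}/2}$, which is the content of the paper's \eqref{equation:todd-resolution} and requires an argument (Fulton, Example 18.3.4(b)), not just a support observation. Second, and more seriously, GRR over the singular base $X$ (Fulton, Theorem 18.3) is a statement about the Riemann--Roch transformation $\tau_X$, not about the Chern character defined via the embedding $i\colon X\into P$; identifying the codimension-one part of $\tau_X\prs{\pi_*\EE}$ with $\ch_1\prs{\pi_*\EE}$ plus a rank-times-Todd term is precisely the comparison the paper performs later, in \cref{lemma:ch2 of torsion part}, and there it uses that RDP singularities are local complete intersections so that $i$ is an l.c.i.\ morphism and Fulton's Theorem 18.3(4) applies. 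Under the hypothesis in force for this proposition --- rational singularities only --- that tool is unavailable. You flag this step yourself as ``the hard part,'' but it is not a deferrable technicality: it is the entire content of part (3), and the paper's elementary counit argument is exactly how one avoids ever having to confront it.
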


\begin{proof}
    \begin{enumerate}
        \item
        Firstly, $\pi$ is dominant so by \cref{lemma:pushforward-tf},  $\pi_*\prs{\mcal{E}}$ is torsion-free. As $\pi$ is an isomorphism outside of the singular locus of $X$, $\mbf{R}^1\pi_*\prs{\mcal{E}}$ vanishes there, so it must be supported on the singular locus of $X$, a zero-dimensional subscheme.

        \item
        Since $\ch_0$ of a sheaf is its generic rank and $\pi$ is an isomorphism over the generic point, the equality follows.
        
        \item

        Let $\eps \colon \pi^* \pi_* \to \id_{\Coh\prs{\tilde{X}}}$
        be the counit of the adjunction $\pi^* \dashv \pi_*$.

        We get an exact sequence
        \[0 \to K \to \pi^* \pi_* \mcal{E} \xrightarrow{\eps_{\mcal{E}}} \mcal{E} \to C \to 0\]
        with $K, C$ being the kernel and cokernel of $\eps_{\mcal{E}}$, respectively.
        Using the additivity of the Chern character we get that
        \[\ch\prs{K} - \ch\prs{\pi^* \pi_* \mcal{E}} + \ch\prs{\mcal{E}} - \ch\prs{C} = 0 \text{,}\]
        so that
        \[\ch\prs{\mcal{E}} = \ch\prs{\pi^* \pi_* \mcal{E}} - \ch\prs{K} + \ch\prs{C} \text{.}\]
        Hence
        \begin{align*}
            \pi^* H . \ch_1\prs{\mcal{E}} &= \pi^* H . \prs{\ch_1\prs{\pi^* \pi_* \mcal{E}} - \ch_1\prs{K} + \ch_1\prs{C}} \text{.}
        \end{align*}
        Since $\pi$ is an isomorphism outside of the exceptional divisor $E$, we get that $\supp\prs{K}, \supp\prs{C} \subseteq E$, so $\pi^* H . \ch_1\prs{K} = \pi^* H . \ch_1\prs{C} = 0$. Therefore, from the above equality and the fact that $\deg\prs{\pi}=1$ it follows that
        \begin{align*}
            \pi^* H . \ch_1\prs{\mcal{E}} &= \pi^* H . \ch_1\prs{\pi^* \pi_* \mcal{E}}
            \\&=
            \pi^* H . \pi^* \ch_1\prs{\pi_* \mcal{E}}
            \\&=
            \deg\prs{\pi} \prs{H . \ch_1\prs{\pi_* \mcal{E}}}
            \\&=
            H.\ch_1\prs{\pi_* \mcal{E}} \text{.}
        \end{align*}
        
        \item
        Let $\mcal{G} \subseteq \prs{\pi^* \mcal{F}}_{\tf}$ with $\rank\prs{\mcal{G}} < \rank\prs{\mcal{F}}$.
        By part \ref{enumerate:psuhforward-ch0} above, we have $\rank\prs{\pi^* \mcal{F}} = \rank\prs{\mcal{F}}$ and $\rank\prs{\mcal{G}} = \rank\prs{\pi_*\mcal{G}}$. Then 
        \[\rank\prs{\prs{\pi^* \mcal {F}}_{\tf}} = \rank\prs{\pi^* \mcal{F}} = \rank\prs{\mcal{F}}\] and \[\pi_* \mcal{G} \subseteq \pi_* \prs{\prs{\pi^* \mcal{F}}_{\tf}}  \text{,}\]
        where we showed that $\pi_* \prs{\prs{\pi^* \mcal{F}}_{\tf}} = \mcal{F}$ for a reflexive sheaf $\FF$ in \cref{Prop:SheafCorrespondence}.
        
        In particular, $\rank\prs{\pi_* \mcal{G}} < \rank\prs{\mcal{F}}$, so by assumption we have
        \[\mu_H\prs{\pi_*\mcal{G}} \leq \mu_H\prs{\mcal{F}}\]
        or equivalently
        \begin{equation}\label{equation:stability-below}
            \frac{\ch_1\prs{\pi_*\mcal{G}} . H}{\rank\prs{\pi_*\mcal{G}}} \leq \frac{\ch_1\prs{\mcal{F}} . H}{\rank\prs{\mcal{F}}} \text{.}
        \end{equation}
        
        Now, since $\mcal{F} \cong \pi_* \prs{\pi^* \mcal{F}}_{\tf}$, part \ref{enumerate:pushforward-ch1} above gives
        \begin{align*}
            H . \ch_1\prs{\pi_* \mcal{G}} &= \pi^* H . \ch_1\prs{\mcal{G}} \text{,} \\
            H. \ch_1\prs{\mcal{F}} &= H . \ch_1\prs{\pi_* \prs{\pi^* \mcal{F}}_{\tf}} = \pi^* H . \ch_1\prs{\prs{\pi^* \mcal{F}}_{\tf}} \text{.}
        \end{align*}
        Hence \eqref{equation:stability-below} gives
        \[\mu_{\pi^* H}\prs{\mcal{G}} = \frac{\pi^* H \ch_1\prs{\mcal{G}}}{\rank\prs{\mcal{G}}} \leq \frac{\pi^* H \ch_1\prs{\prs{\pi^* \mcal{F}}_{\tf}}}{\rank\prs{\prs{\pi^* \mcal{F}}_{\tf}}} = \mu_{\pi^* H}\prs{\prs{\pi^*\mcal{F}}_{\tf}} \text{.}\]

        Thus $\prs{\pi^*\mcal{F}}_{\tf}$ is $\mu_{\pi^*H}$-semistable.
    \end{enumerate}
\end{proof}
 
\begin{Lem}\label{lemma:ch2 of torsion part}
       If $\mcal{F} \in \Coh\prs{X}$ is reflexive, then
    \begin{equation}\label{equation:discriminant-resolution}
        \ch_2\prs{\prs{\pi^* \mcal{F}}_{\tor}} = \frac{1}{2} K_{\tilde{X}} . \ch_1\prs{\prs{\pi^* \mcal{F}}_{\tor}} \text{.}
    \end{equation}
\end{Lem}

\begin{proof}
We note first that by \cite[Example 18.3.4(b)]{Fulton}, 
\begin{equation}\label{equation:todd-resolution}
\Td(X)=\pi_*\Td(\X)+\sum_{p\in X_{\sing}} n_p=\pi_*\Td(\X), 
\end{equation}
where $n_p=l((\mbf{R}^1\pi_*\OO_{\X})_p)=0$ vanishes because $X$ has rational singularities.

Let $\mcal{E} := \prs{\pi^* \mcal{F}}_{\tf}$.
By Grothendieck-Riemann-Roch for singular varieties \cite[Theorem 18.3(1)]{Fulton}
    \[\tau_X(\mbf{R}\pi_*\mcal{E})=\pi_*\prs{\tau_{\tilde{X}}\prs{\mcal{E}}}=\pi_*\prs{\ch\prs{\mcal{E}}.\Todd\prs{\tilde{X}}} \text{,}\]
    since $\tilde{X}$ is smooth.
    Since $\mcal{F}$ is reflexive, we get from \Cref{Prop:SheafCorrespondence} that $\mbf{R}\pi_*\mcal{E}=\mcal{F}$, so 
    \[\tau_X\prs{\mbf{R}\pi_*\mcal{E}}=\tau_X\prs{\mcal{F}}\text{.}\]
    Let $i:X\hookrightarrow P$ be a closed embedding of $X$ into some (smooth) projective space $P$.  Since $X$ has at worst local complete intersection singularities, $i$ is an l.c.i. morphism so that by \cite[Theorem 18.3(4)]{Fulton} we have 

    \begin{align*}
        \tau_X\prs{\mcal{F}}&=\tau_X\prs{i^*i_*\mcal{F}}=\mathrm{td}\prs{T_i}i^*\tau_P\prs{i_*\mcal{F}}=\mathrm{td}\prs{N_{X/P}}^{-1}i^*\prs{\mathrm{td}\prs{T_P}.\ch\prs{i_*\mcal{F}}}\\
        &=\mathrm{td}\prs{T_X}.i^*\prs{\ch\prs{i_*\mcal{F}}}=\Todd\prs{X}.\ch\prs{i^*i_*\mcal{F}}=\Todd\prs{X}.\ch\prs{\mcal{F}}\text{.}
        \end{align*}
        It follows from \eqref{equation:todd-resolution} 
        that $\Todd\prs{X}=\pi_*\Todd\prs{\tilde{X}}$ and thus from the projection formula we get

        \[\tau_X\prs{\mcal{F}}=\Todd\prs{X}.\ch\prs{\mcal{F}}=\pi_*\prs{\Todd\prs{\tilde{X}}.\pi^*\ch\prs{\mcal{F}}}=\pi_*\prs{\Todd\prs{\tilde{X}}.\ch\prs{\pi^*\mcal{F}}}.\]
  Putting everything together we get
  \[\pi_*\prs{\Todd\prs{\tilde{X}}.\ch\prs{\mcal{E}}}=\pi_*\prs{\Todd\prs{\tilde{X}}.\ch\prs{\pi^*\mcal{F}}},\]
  from which we get that 
    \[\pi_*\prs{\ch\prs{\prs{\pi^* \mcal{F}}_{\tor}} . \Todd\prs{\tilde{X}}}= 0.
    \]
    Comparing terms in codimension $2$, and using the fact that $\rank\prs{\prs{\pi^* \mcal{F}}_{\tor}} = 0$ we get that
    $$ - \frac{1}{2} \ch_1\prs{\prs{\pi^* \mcal{F}}_{\tor}} . K_{\tilde{X}} + \ch_2\prs{\prs{\pi^* \mcal{F}}_{\tor}}  = 0$$
    and thus the result.
\end{proof}
\begin{Rem}
In general in this paper, we use a closed embedding $i\colon X\into P$ into a smooth projective space to define the Chern character of a coherent sheaf on the singular variety $X$ as in the above lemma.  By \cite[Proposition 18.3.1]{Fulton}, this is independent of the choice of embedding.
\end{Rem}

\begin{Prop}\label{lemma:bg-resolution}
    Let $\mcal{F} \in \Coh\prs{X}$ be reflexive and let $\mcal{E} = \prs{\pi^* \mcal{F}}_{\tf}$ with the above notation.
    Assume further that $X$ has at worst rational double point singularities. 
    Then
    \begin{equation}\label{equation:singular-BG}
        \Delta\prs{\mcal{F}} \geq \Delta\prs{\mcal{E}} \text{.}
    \end{equation}
\end{Prop}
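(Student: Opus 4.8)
The plan is to compute the difference $\Delta\prs{\mcal{F}} - \Delta\prs{\mcal{E}}$ by transferring all intersection numbers from the singular surface $X$ up to the smooth minimal resolution $\X$, where intersection theory is unambiguous, and then to recognize the surviving quantity as minus the self-intersection of a cycle supported on the exceptional locus. To begin, I would fix $T := \prs{\pi^* \mcal{F}}_{\tor}$ together with its defining exact sequence $0 \to T \to \pi^* \mcal{F} \to \mcal{E} \to 0$ on $\X$, and record two facts: $\mbf{R}\pi_* \mcal{E} = \mcal{F}$ by \cref{Prop:SheafCorrespondence}, and $T$ is supported on the one-dimensional exceptional locus $E = \bigcup_i E_i$, so that $\ch_0\prs{T} = 0$ and $\ch_1\prs{T}$ lies in the $\Q$-span of the exceptional curve classes $\brs{E_i}$. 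Since the operational Chern character is additive on the sequence and compatible with the pullback $\pi^*$, I obtain $\pi^*\ch_j\prs{\mcal{F}} = \ch_j\prs{\mcal{E}} + \ch_j\prs{T}$ for $j = 1,2$, and in particular $\ch_0\prs{\mcal{E}} = \ch_0\prs{\mcal{F}} =: r$ (cf. \cref{proposition:pushforward}).

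Next I would move the numbers $\ch_1\prs{\mcal{F}}^2$ and $\ch_2\prs{\mcal{F}}$—which are defined on the singular $X$ as degrees of $0$-cycles via the embedding $i\colon X \into P$—up to $\X$. Since $\pi_*\brs{\X} = \brs{X}$, the projection formula yields $\deg_X \alpha = \deg_{\X}\prs{\pi^*\alpha}$ for any $\alpha \in A^2\prs{X}$, and $\pi^*$ is a ring homomorphism, so that, as degrees on $\X$, one has $\ch_1\prs{\mcal{F}}^2 = \prs{\pi^*\ch_1\prs{\mcal{F}}}^2$ and $\ch_2\prs{\mcal{F}} = \pi^*\ch_2\prs{\mcal{F}}$. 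Writing $c := \ch_1\prs{\mcal{E}}$ and $t := \ch_1\prs{T}$ and substituting, a short computation gives
$$\Delta\prs{\mcal{F}} - \Delta\prs{\mcal{E}} = 2\,c \cdot t + t^2 - 2r\,\ch_2\prs{T}.$$

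I would then eliminate the two remaining error terms. By \cref{lemma:ch2 of torsion part}, $\ch_2\prs{T} = \tfrac12 K_{\X} \cdot t$; and because $X$ has RDP (hence Gorenstein, canonical) singularities, the minimal resolution is crepant, so each $E_i$ is a $\prs{-2}$-curve with $K_{\X} \cdot E_i = 0$ by adjunction. Thus $K_{\X}\cdot t = 0$ and $\ch_2\prs{T} = 0$; this is the one place the Gorenstein hypothesis enters. Similarly, the projection formula gives $\pi^*\ch_1\prs{\mcal{F}} \cdot E_i = \ch_1\prs{\mcal{F}}\cdot \pi_* E_i = 0$, hence $\pi^*\ch_1\prs{\mcal{F}}\cdot t = 0$; combined with $c = \pi^*\ch_1\prs{\mcal{F}} - t$ this forces $c \cdot t = -t^2$. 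Therefore $\Delta\prs{\mcal{F}} - \Delta\prs{\mcal{E}} = -2t^2 + t^2 = -t^2$.

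It then remains only to observe that $t^2 \leq 0$. As $t$ lies in the $\Q$-span of the exceptional curve classes, this is precisely the classical negative-definiteness of the intersection form on the exceptional locus of a resolution of a normal surface singularity. I expect the main obstacle to lie not in this final positivity step—which is immediate once the cross term and $\ch_2\prs{T}$ have been removed—but in the bookkeeping that legitimizes the intersection-theoretic manipulations on the singular $X$: justifying $\deg_X = \deg_{\X}\circ\,\pi^*$ on codimension-two operational classes, the compatibility of the embedding-defined Chern classes with $\pi^*$, and the clean verification that $\ch_1\prs{T}$ is genuinely supported on the exceptional curves.
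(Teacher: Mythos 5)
Your proposal is correct and takes essentially the same route as the paper's own proof: the same decomposition $0 \to \prs{\pi^*\mcal{F}}_{\tor} \to \pi^*\mcal{F} \to \mcal{E} \to 0$, the same appeal to \cref{lemma:ch2 of torsion part}, the same use of the projection formula to turn the cross term into $-t^2$, and the same conclusion via negative definiteness of the exceptional intersection form. The only cosmetic differences are that you kill $K_{\X}\cdot t$ by adjunction on $\prs{-2}$-curves where the paper invokes crepancy $K_{\X}=\pi^*K_X$ directly, and that you spell out the bookkeeping $\deg_X = \deg_{\X}\circ\pi^*$ which the paper leaves implicit.
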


\begin{proof}
    Let $Z := \ch_1\prs{\prs{\pi^* \mcal{F}}_{\tor}}$. 
    We have
    \begin{align*}
        \Delta\prs{\mcal{F}} &= \ch_1\prs{\mcal{F}}^2 - 2 \ch_0\prs{\mcal{F}} . \ch_2\prs{\mcal{F}}
        \\&= \prs{\ch_1\prs{\pi^* \mcal{F}}}^2 - 2 \ch_0\prs{\pi^* \mcal{F}}  \ch_2\prs{\pi^* \mcal{F}} \text{.}
    \end{align*}
    Using the additivity of $\ch$ on short exact sequences, and $\ch_0\prs{\pi^* \mcal{F}} = \ch_0\prs{\prs{\pi^* \mcal{F}}_{\tf}} = \ch_0\prs{\mcal{E}}$, we get
    \begin{align*}
        \Delta\prs{\mcal{F}} &= \prs{Z + \ch_1\prs{\mcal{E}}}^2 - 2 \ch_0\prs{\mcal{E}}  \prs{\ch_2\prs{\mcal{E}} + \ch_2\prs{\prs{\pi^* \mcal{F}}_{\tor}}}
        \\&=
        Z^2 + 2 \ch_1\prs{\mcal{E}} . Z + \ch_1\prs{\mcal{E}}^2 - 2 \ch_0\prs{\mcal{E}}  \ch_2\prs{\mcal{E}} - 2\ch_0\prs{\mcal{E}}  \ch_2\prs{\prs{\pi^* \mcal{F}}_{\tor}}
        \\&=
        Z^2 + 2 \ch_1\prs{\mcal{E}} . Z + \ch_1\prs{\mcal{E}}^2 - 2 \ch_0\prs{\mcal{E}}  \ch_2\prs{\mcal{E}} - \ch_0\prs{\mcal{E}}  K_{\tilde{X}} . \ch_1\prs{\prs{\pi^* \mcal{F}}_{\tor}} \text{,}
    \end{align*}
    where the last equality follows from \Cref{lemma:ch2 of torsion part}.
    As $\ch_1\prs{\mcal{E}} = \ch_1\prs{\pi^* \mcal{F}} - Z$, it follows that
    \begin{align*}
        \ch_1\prs{\mcal{E}} . Z = \ch_1\prs{\pi^* \mcal{F}} . Z - Z^2 = -Z^2 \text{.}
    \end{align*}
    Thus
    \begin{align*}
        \Delta\prs{\mcal{F}} = -Z^2 + \Delta\prs{\mcal{E}} - \ch_0\prs{\mcal{E}} . K_{\tilde{X}} . Z \text{.}
    \end{align*}
    Since $X$ has at most rational double point singularities, $K_{\tilde{X}}=\pi^*K_X$ by \cite[Theorem 4-6-2]{Mat02}, so 
    $$K_{\tilde{X}} . Z = \pi^*K_X.Z=K_X.\pi_*(Z)=K_X.0=0$$
    as $Z$ is exceptional.
    Therefore
    \begin{align*}
        \Delta\prs{\mcal{F}} = -Z^2 + \Delta\prs{\mcal{E}}\geq \Delta\prs{\EE} \text{,}
    \end{align*}
    since $Z$ is exceptional so that $Z^2 \leq 0$.
\end{proof}
\subsection{Putting it all together}
\begin{Thm}
    Let $X$ be a projective surface over an algebraically-closed field $k$ with at worst rational double point singularities and let $H$ be an ample divisor on $X$.  For every $\mu_H$-semistable sheaf $\mcal{F}$ on $X$, 
\begin{equation}
   \Delta(\FF) + C_X \ch_0(\FF)^2\geq 0
\end{equation}
\end{Thm}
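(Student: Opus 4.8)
The plan is to assemble the results of the previous subsections and to reduce in two stages: first from an arbitrary $\mu_H$-semistable (hence torsion-free) sheaf on $X$ to a reflexive one, and then from a reflexive sheaf on $X$ to a locally free sheaf on the minimal resolution $\pi\colon\X\to X$, where \cref{Thm:BG-smooth} applies. Throughout I set $C_X:=C_{\X}$; this is well-defined because Koseki's constant depends only on the characteristic of $k$ and the birational class of $\X$, and any two resolutions of $X$ are birational.

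First I would treat the reflexive case. Suppose $\FF$ is reflexive and $\mu_H$-semistable, and put $\EE:=(\pi^*\FF)_{\tf}$. By \cref{Prop:SheafCorrespondence} the sheaf $\EE$ is locally free on the smooth surface $\X$, and by \cref{proposition:pushforward}(4) it is $\mu_{\pi^*H}$-semistable. The divisor $\pi^*H$ is nef, being the pullback of an ample divisor, and numerically non-trivial since $(\pi^*H)^2=H^2>0$ by the projection formula. Hence \cref{Thm:BG-smooth}, which applies to the nef polarization $\pi^*H$ on $\X$ with the polarization-independent constant $C_{\X}$ (cf. \cref{proposition:invariance}), yields $\Delta(\EE)+C_{\X}\ch_0(\EE)^2\geq 0$. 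Since $\ch_0(\EE)=\ch_0(\pi^*\FF)=\ch_0(\FF)$, while \cref{lemma:bg-resolution} gives $\Delta(\FF)\geq\Delta(\EE)$, I conclude that $\Delta(\FF)+C_X\ch_0(\FF)^2\geq\Delta(\EE)+C_{\X}\ch_0(\EE)^2\geq 0$, settling the reflexive case.

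Next I would reduce the general torsion-free case to the reflexive one via the reflexive hull $\FF^{\vee\vee}$. Since RDP singularities are canonical and hence normal, $\FF^{\vee\vee}$ is reflexive, and the canonical inclusion $\FF\into\FF^{\vee\vee}$ has cokernel $Q$ supported in codimension two, i.e.\ on finitely many points. Because $\FF$ and $\FF^{\vee\vee}$ agree in codimension one, they have the same rank and the same $\ch_1$, their corresponding subsheaves have equal slopes, and $\FF^{\vee\vee}$ inherits $\mu_H$-semistability from $\FF$. Comparing Chern characters through $0\to\FF\to\FF^{\vee\vee}\to Q\to 0$, where $\ch_2(Q)$ is the class of a zero-cycle of non-negative degree $\len(Q)$, one obtains $\Delta(\FF^{\vee\vee})=\Delta(\FF)-2\ch_0(\FF)\len(Q)\leq\Delta(\FF)$. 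Applying the reflexive case to $\FF^{\vee\vee}$ and using $\ch_0(\FF^{\vee\vee})=\ch_0(\FF)$ then gives the inequality for $\FF$.

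The substantive geometry --- the reflexive/locally-free correspondence, the Grothendieck--Riemann--Roch comparison of discriminants upstairs and downstairs, and the semistability of the torsion-free pullback --- has already been established, so the final assembly is largely bookkeeping. The one point I would handle with care is the behavior of the Chern character, which on the singular surface $X$ is defined through the closed embedding $i\colon X\into P$: I must verify that passing to $\FF^{\vee\vee}$ leaves $\ch_1$ unchanged and raises $\ch_2$ by the non-negative length of the zero-dimensional cokernel $Q$, a computation that is immediate on a smooth surface but here must be read off from $i_*Q$ on $P$.
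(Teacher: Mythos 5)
Your proof is correct and follows essentially the same route as the paper: reduce to the reflexive case, pass to $\EE=(\pi^*\FF)_{\tf}$ on the minimal resolution, apply Koseki's theorem to the numerically non-trivial nef divisor $\pi^*H$ (via \cref{proposition:invariance}), and compare discriminants via \cref{lemma:bg-resolution} together with $\ch_0(\EE)=\ch_0(\FF)$ and $C_X=C_{\X}$. The only difference is that you make explicit the reduction from torsion-free to reflexive through the double dual, with $\Delta(\FF^{\vee\vee})=\Delta(\FF)-2\ch_0(\FF)\len(Q)\leq\Delta(\FF)$, a step the paper compresses into the phrase ``by the standard argument''; your version of it is the intended one and is correct.
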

\begin{proof}
By the standard argument, it suffices to prove the theorem assuming $\FF$ is reflexive.  Now let $\mcal{E} = \prs{\pi^* \mcal{F}}_{\tf}$, which is $\mu_{\pi^* H}$-semistable by \Cref{proposition:pushforward}. Since $\tilde{X}$ is a smooth projective surface, \cref{Thm:BG-smooth} asserts that $T^1\prs{r}$ holds for all $r \geq 1$ for any ample divisor $H'$ on $\tilde{X}$.  Since ample divisors are certainly numerically non-trivial and nef, \Cref{proposition:invariance} then implies that $T^1\prs{r}$ holds for $\pi^* H$, so that
    \[\Delta\prs{\mcal{E}} +C_{\X}\ch_0(\EE)^2 \geq 0 \text{.}\]
    By \Cref{lemma:bg-resolution} 
    \[\Delta\prs{\mcal{F}} \geq \Delta\prs{\mcal{E}}  \text{,}\]
    so from $C_X=C_{\X}$ and $\ch_0(\FF)=\ch_0(\EE)$ we get
    $$\Delta(\FF)+C_X\ch_0(\FF)^2=\Delta(\FF)+C_{\X}\ch_0(\EE)^2\geq\Delta(\EE)+C_{\X}\ch_0(\EE)^2\geq 0,$$
    as required.
\end{proof}
\section{Applications}
We generalize some standard consequences of the BG inequality.  Although the modifications to the classical proofs necessary for the singular case are minor, we provide complete proofs for the sake of completeness. 
\begin{Cor}\label{corollary:bg-cor-1}
    Suppose $X$ is a projective surface with at worst rational double point singularities and $\mcal{F}$ is a rank 2 reflexive sheaf such that $\Delta\prs{\mcal{F}}+4C_X<0$.  Then there is a unique rank 1 reflexive subsheaf $\mcal{O}_X(D)$ of $\mcal{F}$ and an exact sequence 
    \[0\to\mcal{O}_X\prs{D}\to\mcal{F}\to\mcal{M}\to 0,\]
    where $\mcal{M}$ is a rank 1 torsion-free sheaf with $c_1\prs{\mcal{M}}=D'$, and $D,D'$ are Weil divisors such that $(D-D')^2>4C_X$ and $(D-D').H>0$ for every ample divisor $H$.
\end{Cor}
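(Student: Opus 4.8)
The plan is to reproduce Bogomolov's classical instability argument, using \cref{Thm:MainThm} to force instability and the Hodge index theorem to make the destabilizing subsheaf independent of the polarization. Since $\FF$ is reflexive with $\ch_0(\FF)=2$, the hypothesis $\Delta(\FF)+4C_X<0$ and the contrapositive of \cref{Thm:MainThm} show that $\FF$ is \emph{not} $\mu_H$-semistable for any ample $H$. Fix one ample $H_0$ and let $G\into\FF$ be the maximal destabilizing subsheaf. It has rank $1$ and torsion-free quotient, so it is saturated; a saturated rank $1$ subsheaf of a reflexive sheaf on the normal surface $X$ is itself reflexive (if $G\subsetneq G^{\vee\vee}$ then the torsion sheaf $G^{\vee\vee}/G$ would inject into the torsion-free $\FF/G$), hence $G=\mcal O_X(D)$ for a Weil divisor $D$. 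Writing the quotient as $\mcal M$, a rank $1$ torsion-free sheaf with $c_1(\mcal M)=D'$, produces the desired exact sequence, and maximal destabilization gives $\mu_{H_0}(\mcal O_X(D))>\mu_{H_0}(\FF)$, i.e. $(D-D').H_0>0$.

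Next I would establish the discriminant identity. Additivity of $\ch$ on $0\to\mcal O_X(D)\to\FF\to\mcal M\to 0$, together with the rank $1$ computation, should give $(D-D')^2=-\Delta(\FF)+4\ell$ with $\ell=\len(\mcal M^{\vee\vee}/\mcal M)\geq 0$, whence $(D-D')^2\geq-\Delta(\FF)>4C_X\geq 0$. Since $C_X\geq 0$ this forces $(D-D')^2>0$, so by the Hodge index theorem on the normal projective surface $X$ the class $\xi:=D-D'$ lies in the positive cone; as $\xi.H_0>0$ it lies in the same component as the ample cone, and therefore $(D-D').H>0$ for every ample $H$. I expect this discriminant bookkeeping on the \emph{singular} surface to be the step most likely to hide a subtlety: the $\ch_2$ of a rank $1$ reflexive sheaf $\mcal O_X(D)$ need not equal $\tfrac12 D^2$, and one must check the possible corrections supported at $X_{\sing}$ do not spoil the inequality. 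This can be handled by pulling the sequence back along $\pi\colon\X\to X$: setting $\mcal E=(\pi^*\FF)_{\tf}$ and extracting the induced rank $1$ sub and quotient on the smooth surface $\X$, one has $\Delta(\FF)\geq\Delta(\mcal E)$ by \cref{lemma:bg-resolution} and $C_{\X}=C_X$, while the corresponding class $\tilde\xi$ on $\X$ differs from $\pi^*(D-D')$ by an exceptional divisor of non-positive self-intersection; since $\pi^*(D-D').E=0$ for exceptional $E$, this yields $(D-D')^2\geq\tilde\xi^2=-\Delta(\mcal E)+4\tilde\ell>4C_{\X}=4C_X$, where $\mcal E$ violates the smooth inequality of \cref{Thm:BG-smooth} and is thus unstable.

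Finally I would prove uniqueness by a positivity argument. Suppose $\mcal O_X(D_1)$ and $\mcal O_X(D_2)$ are rank $1$ reflexive subsheaves of $\FF$ with $\xi_i:=D_i-D_i'$ satisfying $\xi_i.H>0$ for all ample $H$, and suppose they are distinct. Then the composite $\mcal O_X(D_1)\into\FF\onto\FF/\mcal O_X(D_2)=\mcal M_2$ is nonzero; factoring through $\mcal M_2\into\mcal M_2^{\vee\vee}=\mcal O_X(D_2')$ gives a nonzero map of rank $1$ reflexive sheaves, hence an effective Weil divisor $A=D_2'-D_1$. Using $D_1+D_1'=D_2+D_2'=c_1(\FF)$ one computes $-(\xi_1+\xi_2)=2A$, so $(\xi_1+\xi_2).H=-2A.H\leq 0$ for every ample $H$, contradicting $\xi_i.H>0$. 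Therefore the composite vanishes, forcing $\mcal O_X(D_1)\subseteq\mcal O_X(D_2)$ and hence, both being saturated of rank $1$, equality. The remaining routine points—existence of the Harder--Narasimhan maximal destabilizer for the fixed polarization $H_0$ on the normal surface, and the validity of the Hodge index theorem for numerical classes of Weil divisors on $X$ (obtained by pullback to $\X$)—I would treat as standard.
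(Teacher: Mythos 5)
Your proof is correct, and structurally it is the paper's proof: instability for every polarization via \cref{Thm:MainThm}, reflexivity of the saturated maximal destabilizer (hence of the form $\mcal{O}_X\prs{D}$), the embedding $\mcal{M}\into\mcal{M}^{\vee\vee}=\mcal{O}_X\prs{D'}$ with zero-dimensional cokernel $\mcal{T}$, the Hodge index theorem to pass from $\prs{D-D'}.H_0>0$ to all ample classes, and uniqueness --- your argument with the effective class $A=D_2'-D_1$ is a hands-on rendering of the paper's appeal to uniqueness of the maximal destabilizer, and both work. The genuine difference is how you prove $\prs{D-D'}^2>4C_X$. The paper does this by the direct computation \eqref{eq:2-step HNfiltration} on $X$, which silently uses $\ch_2\prs{\mcal{O}_X\prs{D}}=\tfrac12 D^2$ and $\ch_2\prs{\mcal{O}_X\prs{D'}}=\tfrac12 D'^2$; you single out exactly this identity as the dangerous step and instead work on $\X$, combining $\Delta\prs{\FF}\geq\Delta\prs{\mcal{E}}$ from \cref{lemma:bg-resolution} with the classical rank-two identity there, plus the fact that the relevant class $\tilde{\xi}$ upstairs equals $\pi^*\prs{D-D'}$ plus an exceptional class of non-positive square. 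Your suspicion is well founded and this is more than pedantry: writing $\prs{\pi^*\mcal{O}_X\prs{D}}_{\tf}=\mcal{O}_{\X}\prs{\tilde{D}}$ with $\tilde{D}=\pi^*D+F_D$ and $F_D$ exceptional, the paper's own \cref{lemma:ch2 of torsion part} together with $K_{\X}=\pi^*K_X$ and the identification $\ch_2\prs{\FF}=\ch_2\prs{\pi^*\FF}$ used in \cref{lemma:bg-resolution} force $\ch_2\prs{\mcal{O}_X\prs{D}}=\tfrac12\tilde{D}^2=\tfrac12 D^2+\tfrac12 F_D^2$, and $F_D\neq 0$ genuinely occurs: for a ruling $L$ on the quadric cone one has $\tfrac12 L^2=\tfrac14$, whereas $\tfrac12\tilde{L}^2$ is a non-positive integer. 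The corollary nonetheless survives because the error enters with a favorable sign: the corrected computation reads $\Delta\prs{\FF}=-\prs{D-D'}^2-2F_D^2-2F_{D'}^2+4\ch_2\prs{\mcal{T}}$ with $-2F_D^2-2F_{D'}^2\geq 0$, so $\prs{D-D'}^2>4C_X$ still follows --- and this correction is in substance your resolution-side argument. The one loose end in your sketch, namely that saturating the image of $\pi^*\mcal{O}_X\prs{D}$ inside $\mcal{E}$ changes $\tilde{D}$ only by exceptional components, holds because that image is already saturated away from the exceptional locus ($\mcal{M}$ being torsion-free there). In short, the paper's route is shorter but rests on a Chern character identity that is false as stated for non-Cartier Weil divisors, while your route keeps all intersection-theoretic bookkeeping on the smooth model, where it is honest, and thereby actually closes this gap.
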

\begin{proof}
    Fix an ample divisor $H_0$.  \cref{Thm:MainThm} implies that $\mcal{F}$ is not $\mu_{H_0}$-semistable.  The maximally destabilizing subsheaf of $\FF$ with respect to $\mu_{H_0}$-stability is saturated and thus reflexive because $\FF$ itself is.  As a rank one reflexive sheaf, it must be isomorphic to $\mcal{O}_X(D)$ for some Weil divisor $D$ by \cite[Proposition 2.8]{Hartshorne:GeneralizedDivisors}.  Thus we have a short exact sequence
    \begin{equation}\label{eq:destabilizing}
        0 \to \OO_X(D) \to \mcal{F} \to \mcal{M} \to 0,
    \end{equation}
    with $\mcal{M}$ a rank 1 torsion-free sheaf.

Now observe that $\mcal{M}$ injects into its double-dual which is then another rank 1 reflexive sheaf, necessarily of the form $\mcal{O}_X(D')$, where $D'=c_1(\mcal{F})-D$.  From the instability of \eqref{eq:destabilizing}, we have $D.H_0>D'.H_0$, so $\prs{D-D'}.H_0>0$.  Moreover, we have a short exact sequence 
\[0\to\mcal{M}\to\mcal{O}_X\prs{D'}\to\mcal{T}\to 0,\]
where $\mcal{T}$ is a zero-dimensional sheaf so that $\ch_2\prs{\mcal{T}}\geq 0$.  It follows that 
\begin{align}\label{eq:2-step HNfiltration}
\begin{split}
    0&>\Delta\prs{\mcal{F}}+4C_X=c_1\prs{\mcal{F}}^2-4\ch_2\prs{\mcal{F}}+4C_X\\
    &=\prs{D+D'}^2-4\prs{\ch_2\prs{\mcal{O}\prs{D}}+\ch_2\prs{\mcal{M}}}+4C_X\\
    &=\prs{D^2+2D.D'+D'^2}-4\prs{\frac{1}{2}D^2+\frac{1}{2}D'^2-\ch_2\prs{\mcal{T}}}+4C_X\\
    &=-\prs{D-D'}^2+4\ch_2\prs{\mcal{T}}+4C_X\geq4C_X-\prs{D-D'}^2,
    \end{split}
\end{align}
so $\prs{D-D'}^2>4C_X$ as claimed.

Thus $(D-D')^2>0$,  and since $(D-D').H_0>0$ for the one ample divisor $H_0$, it follows from \cite[Ch. 1, Lemma 19]{Fri98} that $(D-D').H>0$ for every ample divisor $H$.
But then \eqref{eq:destabilizing} is a destabilizing short exact sequence for any ample divisor $H$, so the uniqueness of the Harder-Narasimhan filtration gives the uniqueness of the rank 1 reflexive subsheaf $\mcal{O}_X\prs{D}\subset\mcal{F}$, as required. 
\end{proof}

\begin{Cor}
    Let $X$ be a projective surface with at worst rational double point singularities, let $H$ be an ample Cartier divisor on $X$ and let $\mcal{F}$ be a $\mu_H$-stable rank $2$ reflexive sheaf on $X$ with $\Delta\prs{\mcal{F}} = p$. Then for all integers $k \geq\max\prs{\sqrt{\frac{p+4C_X}{H^2}}, p}$ and every integral curve $C \in \abs{k H}$, the sheaf $\left.\mcal{F}\right|_{C}$ is stable.
\end{Cor}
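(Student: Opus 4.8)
The plan is to argue by contradiction, following the classical rank-two effective restriction theorem (\cite[Theorem 6]{Fri98}) but feeding in \cref{Thm:MainThm} and \cref{corollary:bg-cor-1} in place of the smooth Bogomolov--Gieseker inequality. Suppose $\left.\mcal{F}\right|_{C}$ is not stable. Since it has rank two, there is a rank-one torsion-free quotient $\left.\mcal{F}\right|_{C} \twoheadrightarrow \mcal{Q}$ on the integral curve $C$ with $\deg\prs{\mcal{Q}} \leq \tfrac12\deg\prs{\left.\mcal{F}\right|_{C}} = \tfrac{k}{2}\prs{c_1\prs{\mcal{F}}.H}$. Viewing $\mcal{Q}$ as a torsion sheaf on $X$ supported on $C$ and letting $\mcal{G}$ be the kernel of the induced surjection $\mcal{F}\twoheadrightarrow\mcal{Q}$, I get an elementary modification $0\to\mcal{G}\to\mcal{F}\to\mcal{Q}\to 0$ with $\mcal{G}$ a rank-two subsheaf of $\mcal{F}$ agreeing with $\mcal{F}$ away from $C$.

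First I would compute Chern characters. Since $\ch_1\prs{\mcal{Q}} = [C] = kH$, additivity gives $\ch_1\prs{\mcal{G}} = c_1\prs{\mcal{F}} - kH$; a Grothendieck--Riemann--Roch computation for $\mcal{Q}$ (through the embedding-based Chern character of Section 2) yields $\ch_2\prs{\mcal{Q}} = \deg\prs{\mcal{Q}} - \tfrac12 C^2$, so that the destabilizing bound on $\deg\prs{\mcal{Q}}$ translates into $\Delta\prs{\mcal{G}} \leq \Delta\prs{\mcal{F}} - k^2H^2 = p - k^2H^2$. Passing to the reflexive hull only decreases the discriminant, so the rank-two reflexive sheaf $\mcal{G}^{\vee\vee}$, which still satisfies $\mcal{G}^{\vee\vee}\subseteq\mcal{F}^{\vee\vee}=\mcal{F}$, has $\Delta\prs{\mcal{G}^{\vee\vee}} \leq p - k^2H^2$. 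The hypothesis $k\geq\sqrt{(p+4C_X)/H^2}$ forces $\Delta\prs{\mcal{G}^{\vee\vee}}+4C_X<0$, so \cref{corollary:bg-cor-1} applies: there is a rank-one reflexive subsheaf $\OO_X\prs{D}\subseteq\mcal{G}^{\vee\vee}$ whose complementary class $D' := c_1\prs{\mcal{G}^{\vee\vee}} - D = c_1\prs{\mcal{F}} - kH - D$ satisfies $\prs{D-D'}^2>4C_X$ and $\prs{D-D'}.H'>0$ for every ample $H'$.

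The contradiction then comes from playing the two lower bounds on $k$ against each other, using the Hodge index theorem and integrality. On one hand, $\OO_X\prs{D}\subseteq\mcal{G}^{\vee\vee}\subseteq\mcal{F}$ together with the $\mu_H$-stability of $\mcal{F}$ gives $D.H<\tfrac12 c_1\prs{\mcal{F}}.H$, i.e. $\prs{D-D'}.H<kH^2$; since $H$ is Cartier, $\prs{D-D'}.H$ is an integer, so in fact $0<\prs{D-D'}.H\leq kH^2-1$. The Hodge index theorem then bounds $\prs{D-D'}^2\leq \prs{\prs{D-D'}.H}^2/H^2 \leq \prs{kH^2-1}^2/H^2 = k^2H^2-2k+1/H^2$. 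On the other hand, writing $\Delta\prs{\mcal{G}^{\vee\vee}} = -\prs{D-D'}^2+4\ell$ with $\ell\geq0$ (as computed in the proof of \cref{corollary:bg-cor-1}) and combining with $\Delta\prs{\mcal{G}^{\vee\vee}}\leq p-k^2H^2$ gives $\prs{D-D'}^2\geq k^2H^2-p$. Comparing the two estimates yields $k^2H^2-p\leq k^2H^2-2k+1/H^2$, i.e. $2k\leq p+1/H^2\leq p+1$, which contradicts $k\geq p$ (the borderline cases $p\leq 1$ being absorbed by the first bound). Hence $\left.\mcal{F}\right|_{C}$ must be stable.

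The step I expect to be the main obstacle is the Chern-character bookkeeping on the singular surface: making precise the formula for $\ch_2\prs{\mcal{Q}}$ and the resulting inequality $\Delta\prs{\mcal{G}}\leq p-k^2H^2$ when $C$ is an arbitrary integral (hence possibly singular) curve that may meet $X_{\sing}$, so that $\left.\mcal{F}\right|_{C}$ need not be locally free and $\mcal{Q}$ must be treated as a rank-one torsion-free sheaf on $C$ via the embedding-based Chern classes of Section 2. Once this is in place, the remaining ingredients—\cref{corollary:bg-cor-1}, the Hodge index theorem, and the integrality of intersection numbers against the Cartier divisor $H$—combine essentially formally to force the contradiction.
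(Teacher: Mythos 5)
Your proposal follows the paper's own proof almost step for step: the same elementary modification $0\to\mcal{G}\to\mcal{F}\to\iota_*\mcal{Q}\to 0$ along a destabilizing quotient, the same estimate $\Delta\prs{\mcal{G}}\leq p-k^2H^2$, the same appeal to \cref{Thm:MainThm} and \cref{corollary:bg-cor-1} to produce $\OO_X\prs{D}$, and the same endgame combining $\mu_H$-stability of $\mcal{F}$, the Hodge index theorem, and integrality of intersections against the Cartier divisor $H$ (the paper packages the endgame via $m:=-\prs{2D-\ch_1\prs{\mcal{F}}}.H$ and the inequality $2<\frac{m}{kH^2}+\frac{p}{km}$, whereas you bound $\prs{D-D'}.H\leq kH^2-1$ directly; these are equivalent). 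Your passage to the reflexive hull is harmless but vacuous: the kernel of a surjection from the reflexive sheaf $\mcal{F}$ onto the pure one-dimensional sheaf $\iota_*\mcal{Q}$ is automatically reflexive, since $\mcal{G}^{\vee\vee}$ injects into $\mcal{F}^{\vee\vee}=\mcal{F}$ and $\mcal{G}^{\vee\vee}/\mcal{G}$ would be a zero-dimensional subsheaf of the pure sheaf $\iota_*\mcal{Q}$; this is also what the paper needs, silently, in order to apply \cref{corollary:bg-cor-1} to its $\mcal{F}'$.

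The one step that does not follow as written is the claim that $k\geq\sqrt{\prs{p+4C_X}/H^2}$ \emph{forces} $\Delta\prs{\mcal{G}^{\vee\vee}}+4C_X<0$. From $\Delta\prs{\mcal{G}^{\vee\vee}}\leq\Delta\prs{\mcal{G}}\leq p-k^2H^2$ and $k^2H^2\geq p+4C_X$ you only get $\Delta\prs{\mcal{G}^{\vee\vee}}+4C_X\leq 0$, and \cref{corollary:bg-cor-1} genuinely requires strictness: a rank-two reflexive sheaf with $\Delta+4C_X=0$ may be $\mu_H$-semistable, in which case no destabilizing $\OO_X\prs{D}$ exists and the entire endgame evaporates. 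Equality can actually occur---it requires $k^2H^2=p+4C_X$ exactly and $2\deg\prs{\mcal{Q}}=\deg\prs{\mcal{F}|_C}$ exactly---and since $\mcal{G}$ is automatically reflexive there is no slack to be gained from the hull. To be fair, the paper hides the same problem in its first sentence, asserting that non-stability of $\mcal{F}|_C$ yields a quotient with $\mu\prs{\mcal{L}}<\mu\prs{\mcal{F}|_C}$; that is the negation of \emph{semi}stability, not of stability, so your weak inequality $\deg\prs{\mcal{Q}}\leq\frac{1}{2}\deg\prs{\mcal{F}|_C}$ is the correct starting point, but then the needed strictness must be manufactured elsewhere (e.g.\ by assuming $k^2H^2>p+4C_X$, or by accepting only semistability of the restriction in the equality case). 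One smaller imprecision: your parenthetical that the cases $p\leq 1$ are ``absorbed by the first bound'' is not quite right. In the surviving configuration $p=H^2=k=1$, $C_X=0$, the first bound holds with equality and gives nothing; what kills that case is integrality, namely $0<\prs{D-D'}.H\leq kH^2-1=0$, which your intermediate inequalities do contain even though your summary inequality $2k\leq p+1/H^2$ does not.
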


\begin{proof}
    Let us observe first that $\FF|_C$ is torsion-free.  Indeed, in the short exact sequence
    $$0\to \FF\prs{-C}\to\FF\to\FF|_C\to 0$$
    both $\FF$ and $\FF\prs{-C}$ are reflexive (the latter since $H$ is Cartier), so $\FF|_C$ must be a pure one-dimensional sheaf by \cite[Corollary 1.5]{Har80}.

    Now suppose that $\left. \mcal{F} \right|_C$ is not stable. 
    Then there is a rank one torsion-free quotient sheaf $\mcal{L}$ of $\left. \mcal{F} \right|_C$ with $\mu\prs{\mcal{L}} < \mu\prs{\left. \mcal{F} \right|_C}$.
    We get that
    \[\frac{\deg\prs{\mcal{L}}}{1} < \frac{\deg\prs{\left. \mcal{F} \right|_C}}{2} \text{,}\]
    so that
    \[2 \deg\prs{\mcal{L}} < \deg\prs{\left. \mcal{F} \right|_C} = \ch_1\prs{\mcal{F}} . kH \text{.}\]
    Let $\iota \colon C \hookrightarrow X$ be the inclusion and let $\mcal{F}'$ be the kernel of the surjection $\mcal{F} \to \iota_* \mcal{L}$, so that there is an exact sequence
    \[0 \to \mcal{F}' \to \mcal{F} \to \iota_* \mcal{L} \to 0 \text{.}\]
    By \cite[Chapter 2, Lemma 16]{Fri98},
    \[\ch_1\prs{\mcal{F}'} = \ch_1\prs{\mcal{F}} - kH\]
    and
    \[\ch_2\prs{\mcal{F}'} = \ch_2\prs{\mcal{F}} + \frac{1}{2} \prs{kH}^2 - \deg\prs{\mcal{L}} \text{.}\]

    We get
    \begin{align*}
        \Delta\prs{\mcal{F}'} &= \ch_1\prs{\mcal{F}'}^2 - 4 \ch_2\prs{\mcal{F}'}
        \\&=
        \ch_1\prs{\mcal{F}}^2 - 2 \ch_1\prs{\mcal{F}}.kH + \prs{kH}^2 - 4 \ch_2\prs{\mcal{F}} -2 \prs{kH}^2 + 4 \deg\prs{\mcal{L}}
        \\&=
        \Delta\prs{\mcal{F}} - 2 \ch_1\prs{\mcal{F}} . kH - \prs{kH}^2 + 4 \deg\prs{\mcal{L}}
        \\&<
        \Delta\prs{\mcal{F}} - 2 \ch_1\prs{\mcal{F}} . kH - \prs{kH}^2 + 2 \ch_1\prs{\mcal{F}} . kH
        \\&=
        \Delta\prs{\mcal{F}} - \prs{kH}^2
        \\&=
        p - k^2 H^2 \text{.}
    \end{align*}
    Since $H$ is ample we have $H^2 > 0$. Since $k \geq \sqrt{\frac{p+4C_X}{H^2}}$, we get
    \begin{align}\label{equation:bg-cor-2} 
        \Delta\prs{\mcal{F}'}+4C_X < p - k^2 H^2 +4C_X\leq 0 \text{.}
    \end{align}
    Hence, $\mcal{F}'$ does not satisfy the BGK inequality and is therefore not $\mu_H$-semistable by \cref{Thm:MainThm}.

     By \eqref{corollary:bg-cor-1}, there's a unique rank $1$ reflexive subsheaf $\mcal{O}_X\prs{D}$ of $\mcal{F}'$ and an exact sequence
    \begin{align*}
        0 \to \mcal{O}_X\prs{D} \to \mcal{F}' \to \mcal{M} \to 0
    \end{align*}
    with $\ch_1\prs{\mcal{M}} = D'$, $\prs{D - D'}^2 > 4C_X$ and $\prs{D - D'}.H > 0$.
    Moreover, by \eqref{eq:2-step HNfiltration} it follows from the short exact sequence
    \[0 \to \mcal{M} \to \mcal{O}_X\prs{D'} \to \mcal{T} \to 0\]
    with $\mcal{T}$ zero-dimensional, that  $\ch_2\prs{\mcal{T}} \geq 0$ and that
    \begin{equation}\label{eqn:bg-cor-2-2}
       \Delta\prs{\mcal{F}'} = -\prs{D - D'}^2 + 4 \ch_2\prs{\mcal{T}} \text{.}
    \end{equation}

    As $\mcal{O}_X\prs{D}$ is also a subsheaf of $\mcal{F}$, so by stability $\mu_H\prs{\mcal{O}_X\prs{D}} < \mu_H\prs{\mcal{F}}$, i.e. $2 D . H < \ch_1\prs{\mcal{F}} . H$.
    Hence $m := -\prs{2D - \ch_1\prs{\mcal{F}}}.H > 0$.
    Moreover, the Hodge Index Theorem gives $\prs{2D - \ch_1\prs{\mcal{F}}}^2 H^2 \leq m^2$, or equivalently $\prs{2D - \ch_1\prs{\mcal{F}}}^2 \leq \frac{m^2}{H^2}$.

    Now, $D + D' = \ch_1\prs{\mcal{F}'} = \ch_1\prs{\mcal{F}} - kH$ so
    \begin{equation}\label{equation:bg-cor-D-D'}
    D - D' = 2D - \ch_1\prs{\mcal{F}} + kH \text{.}    
    \end{equation}
    Using $H . \prs{D - D'} > 0$ we get
    \begin{equation} \label{equation:m-ineq}
        0 < m = -\prs{2D - \ch_1\prs{\mcal{F}}} . H < kH^2 \text{.}
    \end{equation}
    Combining \eqref{equation:bg-cor-2}, \eqref{eqn:bg-cor-2-2}, and \eqref{equation:bg-cor-D-D'} we get
    \begin{align*}
        p-k^2 H^2 +4C_X &>\Delta\prs{\FF'}+4C_X= -\prs{D - D'}^2 +4\ch_2(\TT)+4C_X
        \\&\geq
        -\prs{\prs{2D - \ch_1\prs{\mcal{F}}} + kH}^2+4C_X
        \\&=
        -\prs{2D - \ch_1\prs{\mcal{F}}}^2 - 2k \prs{2D - \ch_1\prs{\mcal{F}}}.H - k^2 H^2+4C_X
        \\&\geq
        -\frac{m^2}{H^2} + 2k m - k^2 H^2+4C_X \text{.}
    \end{align*}
    Hence
    $2km < \frac{m^2}{H^2} + p$ so $2 < \frac{m}{kH^2} + \frac{p}{km}$. By \eqref{equation:m-ineq} we get $\frac{m}{kH^2} < 1$ and by $p \leq k$ we get $\frac{p}{km} \leq \frac{1}{m} \leq 1$. Hence
    \[2 < \frac{m}{kH^2} + \frac{p}{m} < 1 + 1 = 2 \text{,}\]
    a contradiction. Thus $\left. \mcal{F} \right|_C$ is stable.
\end{proof}

\begin{Cor} \label{corollary:bg-cor-3}
    Let $X$ be a projective surface with at most rational double points and let $N$ be a nef Weil divisor such that $N^2>4C_X$. Then $H^1\prs{X,\mcal{O}_X\prs{N+K_X}} = 0$.
\end{Cor}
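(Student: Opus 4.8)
The plan is to argue by contradiction: supposing $H^1(X,\OO_X(N+K_X))\neq 0$, I will manufacture a rank-two reflexive sheaf that violates the Bogomolov--Gieseker--Koseki inequality and then invoke \Cref{corollary:bg-cor-1}. First I would exploit that rational double points are Gorenstein, so $\omega_X=\OO_X(K_X)$ is a line bundle and Serre duality reads $H^i(X,\mcal G)^{\vee}\cong\Ext^{2-i}(\mcal G,\omega_X)$. Applying this with $\mcal G=\OO_X(-N)$ and running the local-to-global spectral sequence for $\Ext^1(\OO_X(-N),\omega_X)$, whose $\lHom$-term is $H^1(X,\OO_X(N+K_X))$, yields an injection $H^1(X,\OO_X(N+K_X))\hookrightarrow H^1(X,\OO_X(-N))^{\vee}$. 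Thus the hypothesis produces a nonzero class $\xi\in H^1(X,\OO_X(-N))=H^1(X,\lHom(\OO_X(N),\OO_X))$, which the edge map $H^1(X,\lHom(\OO_X(N),\OO_X))\hookrightarrow\Ext^1(\OO_X(N),\OO_X)$ carries to a nonsplit extension
\[0\to\OO_X\to\mcal E\to\OO_X(N)\to 0.\]
Because $\xi$ comes from the $\lHom$-term, its image dies in $H^0(X,\lExt^1(\OO_X(N),\OO_X))$, so the extension splits in a neighbourhood of every point; hence $\mcal E$ is locally $\OO_X\oplus\OO_X(N)$ and in particular reflexive of rank two.

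A direct computation gives $\Delta(\mcal E)=\ch_1(\mcal E)^2-4\ch_2(\mcal E)=N^2-4\ch_2(\OO_X(N))=-N^2$, so $\Delta(\mcal E)+4C_X=4C_X-N^2<0$ by hypothesis, and \Cref{corollary:bg-cor-1} applies. It furnishes a unique rank-one reflexive subsheaf $\OO_X(D)\subseteq\mcal E$ with torsion-free rank-one quotient $\mcal M$ of first Chern class $D'$, where $D+D'=N$, $(D-D')^2>4C_X$, and $(D-D').H>0$ for every ample $H$. The decisive object is the composite $\phi\colon\OO_X(D)\hookrightarrow\mcal E\twoheadrightarrow\OO_X(N)$. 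If $\phi=0$, then $\OO_X(D)$ factors through $\OO_X$, so $-D$ is effective, $D.H\leq 0$, and therefore $(D-D').H=(2D-N).H\leq -N.H<0$, where I use that $N$ nef with $N^2>0$ forces $N.H>0$; this contradicts the positivity $(D-D').H>0$. Hence $\phi\neq 0$, so $N-D=:F$ is effective and $D'=F$.

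The remaining task, which I expect to be the main obstacle, is to exclude $F\neq 0$; here the Hodge index theorem enters. Reading off the discriminant from the two-step filtration exactly as in \eqref{eq:2-step HNfiltration} gives $(D-D')^2=N^2+4\ch_2(\mcal T)\geq N^2$, with $\mcal T$ the zero-dimensional cokernel of $\mcal M\hookrightarrow\OO_X(D')$; since $D-D'=N-2F$ this simplifies to $F^2\geq N.F\geq 0$. As $N$ is nef and $N-2F$ lies in the positive cone, $N.(N-2F)\geq 0$, i.e. $N.F\leq N^2/2$, while Hodge index gives $(N.F)^2\geq N^2F^2\geq N^2(N.F)$. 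If $N.F>0$ these combine to $N^2\leq N.F\leq N^2/2$, contradicting $N^2>0$; if $N.F=0$ they force $F^2=0$ and hence $F\equiv 0$, contradicting that a nonzero effective divisor meets every ample class positively. Therefore $F=0$, so $D=N$ and $\phi$ becomes an isomorphism splitting the extension---the final contradiction. Beyond this numerical endgame, the point requiring care is the Gorenstein/Serre-duality bookkeeping ensuring that $\mcal E$ is genuinely reflexive, so that \Cref{corollary:bg-cor-1} may legitimately be applied.
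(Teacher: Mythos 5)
Your proposal is correct and takes essentially the same route as the paper: reduce by Gorenstein Serre duality to the (non)splitting of extensions $0\to\OO_X\to\mcal{E}\to\OO_X(N)\to 0$, observe $\Delta(\mcal{E})+4C_X=4C_X-N^2<0$, apply \Cref{corollary:bg-cor-1}, show the composite $\OO_X(D)\into\mcal{E}\to\OO_X(N)$ is nonzero so that $F=N-D$ is effective, and force $F=0$ by the same nefness and Hodge-index computation. The only differences are cosmetic: your contradiction framing with the detour through $H^1(X,\OO_X(-N))$ and local-to-global edge maps, where the paper directly identifies $H^1(X,\OO_X(N+K_X))\cong\Ext^1(\OO_X(N),\OO_X)^\vee$, and your local-splitting argument for reflexivity of $\mcal{E}$, where the paper simply notes that an extension of reflexive sheaves is reflexive.
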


\begin{proof}
    As RDP singularities are Gorenstein, $K_X$ is Cartier, so $\OO_X(N+K_X)\cong\OO_X(N)\otimes\OO_X(K_X)$ and by Serre duality,
    $$H^1\prs{X,\OO_X\prs{N+K_X}}=\Ext^1\prs{\OO_X,\OO_X\prs{N+K_X}}=\Ext^1\prs{\OO_X\prs{N},\OO_X}^\vee.$$
    Letting $\LL=\OO_X\prs{N}$, it suffices to show that every extension
    \begin{equation} \label{equation:cor3-ses}
        0 \to \mcal{O}_X \to \mcal{F} \to \mcal{L} \to 0
    \end{equation}
    splits.
    Since $\mcal{O}_X, \mcal{L}$ are reflexive, so is $\FF$.
    
    As 
    $$\ch\prs{\FF}=\prs{2,N,\frac{1}{2}N^2}$$
    By assumption we get that
    \[\Delta\prs{\mcal{F}} +4C_X= \ch_1\prs{\mcal{F}}^2 - 4 \ch_2\prs{\mcal{F}} +4C_X=4C_X -N^2< 0 \text{.}\]
    Hence $\mcal{F}$ is not $H$-semistable for any ample $H$ by \cref{Thm:MainThm}. Let
    \begin{equation} \label{equation:cor3-ses2}
        0 \to \mcal{O}\prs{D} \to \mcal{F} \to \mcal{M} \to 0
    \end{equation}
    be the associated short exact sequence as in \Cref{corollary:bg-cor-1}.
    The composition $\mcal{O}_X\prs{D} \hookrightarrow \mcal{F} \to \mcal{L}$ must then be non-zero by stability as $\mu_H(\FF)=\frac{N.H}{2}>0$.  Since $\mcal{O}_X\prs{D}$ and $\LL$ are torsion-free and of rank one, the kernel is trivial. Hence there is an exact sequence
    \[0 \to \mcal{O}_X\prs{D} \to \mcal{L} \to \mcal{C} \to 0\]
    with $\mcal{C}$ the cokernel. Thus we can write $N=D+E$, where $E := \ch_1\prs{\mcal{C}}$ is effective.
    Then as in \Cref{corollary:bg-cor-1}, we get a short exact sequence
    \begin{align*}
        0 \to \mcal{M} \to \mcal{O}_X\prs{E} \to \mcal{T} \to 0
    \end{align*}
    for a zero-dimensional sheaf $\mcal{T}$, so that $\ch_2\prs{\mcal{T}} \geq 0$.
    Thus
    $$\ch_2\prs{\mcal{M}} = \ch_2\prs{\mcal{O}_X\prs{E}} - \ch_2\prs{\mcal{T}}
        = \frac{1}{2}E^2 - \ch_2\prs{\mcal{T}} \text{.}$$
    On the other hand, by \eqref{equation:cor3-ses2} we have
    \begin{align*}
        \ch_2\prs{\mcal{M}} &= \ch_2\prs{\mcal{F}} - \ch_2\prs{\mcal{O}_X\prs{N-E}}
        \\&=
        \frac{1}{2} N^2 - \prs{\frac{1}{2} N^2 - N.E + \frac{1}{2}E^2}
        \\&=
        N.E - \frac{1}{2} E^2 \text{.}
    \end{align*}
    Comparing the two terms, we get
    \begin{equation}\label{equation:cor3:E^2}
        E^2 = N.E + \ch_2\prs{\mcal{T}} \geq N.E \geq 0 \text{,}
    \end{equation}
    since $N$ is nef and $E$ is effective.
    Now, \Cref{corollary:bg-cor-1} tells us that for every ample divisor $H$, $$\prs{2D - N}.H =\prs{D - E}.H > 0.$$
    It follows that
    \begin{align*}
        N^2 - 2 N.E
        &=
        2\prs{N - E}.N - N^2
        \\&=
        2D.N - N^2
        \\&=
        \prs{2D - N}.N \geq 0
    \end{align*}
    as $N$ is nef and thus a limit of ample divisors.  Thus    \begin{equation}\label{equation:cor3:L^2}
        N^2 \geq 2 N.E \text{,}
    \end{equation}
    and by \eqref{equation:cor3:E^2} we get
    \begin{equation}
        \prs{N^2}\prs{E^2} \geq 2 \prs{N.E} \prs{E^2} \geq 2 \prs{N.E}^2 \text{.}
    \end{equation}
    By the Hodge Index Theorem, $\prs{N^2} \prs{E^2} \leq \prs{N.E}^2$, hence we get $2\prs{N.E}^2 \leq \prs{N.E}^2$, which implies $N.E = 0$. By the Hodge Index Theorem again we get $E^2 \leq 0$ and thus $E^2 = 0$.  That is, $E$ is numerically equivalent to $0$. As $E$ is effective, we get $E = 0$, so in particular $\mcal{L} \cong \mcal{O}\prs{D} \hookrightarrow \mcal{F}$ and we get a nontrivial map $\mcal{L} \hookrightarrow \mcal{F} \to \mcal{L}$.  After adjusting by a scalar we can assume this map to be the identity, so the exact sequence \eqref{equation:cor3-ses} splits as desired.
\end{proof}

\bibliographystyle{plain}
\bibliography{NSF_Research_Proposal}
\end{document}